\DeclareFontFamily{OT1}{pzc}{}
\DeclareFontShape{OT1}{pzc}{m}{it}{<-> [1.15] rpzcmi}{}
\DeclareMathAlphabet{\mathzc}{OT1}{pzc}{m}{it}
\DeclareMathAlphabet{\mathpzc}{OT1}{pzc}{m}{it}
\DeclareSymbolFont{largesymbol}{OMX}{yhex}{m}{n}
\DeclareMathAccent{\Widehat}{\mathord}{largesymbol}{"62}
\newcommand{\Z}{\mathbb{Z}}
\def\la{{\lambda}}
\def\al{{\alpha}}
\newtheorem{theorem}{Theorem}[section]
\newtheorem{lemma}[theorem]{Lemma}
\newtheorem{corollary}[theorem]{Corollary}
\theoremstyle{definition}
\newtheorem{definition}[theorem]{Definition}
\newtheorem{example}[theorem]{Example}
\newtheorem{remark}[theorem]{Remark}
\numberwithin{equation}{theorem}
\def\KK{{\mathzc K\kern0pt}}
\def\vv{{\mathzc v\kern.5pt}}
\def\aq{/\kern-2pt/}
\def\diag{\operatorname{diag}}
\def\La{{\Lambda}}
\def\bse{{\boldsymbol e}}
\def\up{{\boldsymbol{\upsilon}}}
\def\la{{\lambda}}
\def\al{{\alpha}}
\def\fS{{\mathfrak S}}
\def\bfe{{\boldsymbol{e}}}
\def\bfj{{{\bf j}}}
\def\bfa{{{\bf a}}}
\def\bfc{{{\bf c}}}
\def\sSmn{{{\mathcal S^{m|n}}}}
\def\wsSmn{{\widetilde{\mathcal S}^{m|n}}}
\def\SgAj{{A(\bfj)}}
\def\upsh{{\boldsymbol{\upsilon}}_h}
\def\upsi{{\boldsymbol{\upsilon}}_{h+1}}
\def\ups{{\boldsymbol{\upsilon}}}
\def\tiX{X}
\def\bbmk{{\mathbbm k}}
\def\ulA{{\underline A}}
\begin{document}

\baselineskip16pt
\def\hei{\relax}

\title{
The regular representation  of $U_\up(\mathfrak{gl}_{m|n})$ 
}

\author{Jie Du and Zhongguo Zhou}

\address{J.D., School of Mathematics and Statistics,
University of New South Wales, Sydney NSW 2052, Australia}
\email{j.du@unsw.edu.au}
\address{Z.Z., College of Science, Hohai University, Nanjing, China}
\email{zhgzhou@hhu.edu.cn}
\keywords{quantum linear supergroup, polynomial superalgebra, quantum differential operators}

\date{\today}

\subjclass[2010]{16T20, 17B37, 81R50}
\thanks{
The second author would like to thank UNSW for its hospitality during his one year visit and thank the Jiangsu Provincial Department of Education for financial support.}

\maketitle
\begin{abstract}Using quantum differential operators, we construct a super representation of $U_\up(\mathfrak{gl}_{m|n})$ on a certain polynomial superalgebra. We then extend the representation to its formal power series algebra which contains a $U_\up(\mathfrak{gl}_{m|n})$-submodule isomorphic to the regular representation  of $U_\up(\mathfrak{gl}_{m|n})$. In this way, we obtain a presentation of $U_\up(\mathfrak{gl}_{m|n})$ by a basis together with explicit multiplication formulas of the basis elements by generators.
\end{abstract}
\medskip

\section{Introduction}

Arising from the natural representation $V$ of the quantum supergroup $U_\up(\mathfrak{gl}_{m|n})$, the investigation on the tensor products $V^{\otimes r}$ for all $r\geq 0$ has recently produced interesting outcomes. For example, the root-of-unity theory resulted in a new proof for the quantum Mullineux conjecture (see \cite{DLZ}). On the other hand, the generic theory on $\up$-Schur superalgebras, which are homomorphic images of the representations $U_\up(\mathfrak{gl}_{m|n})\to\text{End}(V^{\otimes r})$, gives rise to a new construction for $U_\up(\mathfrak{gl}_{m|n})$ itself (see \cite{DG}).
This latter work extends the geometric realisation of quantum  $\mathfrak{gl}_{n}$, given by Beilinson--Lusztig--MacPherson (BLM) in \cite{BLM}, to the super case. The BLM work has also been generalised to the quantum affine case \cite{DDF,DF1} and the case for the other classical types
\cite{BKLW,FL}.

Furthernore, in the nonsuper case, there are other representations of $U_\up(\mathfrak{gl}_{n})$ arising from the symmetric and exterior algebras $S(V)$ and $\Lambda(V)$ of the natural representation $V$; see, e.g., \cite{H} and \cite[\S\S5A.6-7]{jc}, where the module actions are defined by using certain quantum differential operators. Can these representations be used to determine the structure of a quantum supergroup?
We will provide an affirmative answer in this paper.

We will start with the natural super representations $V=V_0\oplus V_1$ of $U_\up(\mathfrak{gl}_{m|n}).$ We first introduce two types of symmetric superalgebras $\mathcal S_{0|1}(V)=S(V_0)\otimes \Lambda(V_1)$ and $\mathcal S_{1|0}(V)=\Lambda(V_0)\otimes S(V_1)$ and their mixed tensor product $\sSmn(V)$. The supermodule structure on each of them is defined via quantum differential (super) operators. We then extend the supermodule structure to the formal power series algebra $\wsSmn(V)$ which is naturally a $U_\up(\mathfrak{gl}_{m|n})$-module.
We will extract a submodule from $\wsSmn(V)$ which naturally possesses a supermodule structure. We prove in the main theorem (Theorem \ref{genmod}) that this supermodule is isomorphic to the regular representation of $U_\up(\mathfrak{gl}_{m|n})$.
Thus, we obtain a new presentation for $U_\up(\mathfrak{gl}_{m|n})$ (cf. Lemma \ref{key lemma}).
 Surprising enough, this presentation from the regular representation of $U_\up(\mathfrak{gl}_{m|n})$ coincides with the one from \cite[Lemma 5.3]{BLM} when $n=0$ and with the one as given in \cite[Thm 8.4]{DG} in general, both of which were obtained either by a geometric method involving quantum Schur algebras or by an algebraic method involving quantum Schur superalgebras.

\section{The quantum supergroup $U_\up(\mathfrak {gl}_{m|n})$ and differential operators}

  For fixed non-negative integers $m,n$ with $m+n>0$, let $ [1,m+n]:=\{1,2,\cdots, m+n\},$
and define the parity function $\widehat{\ }:[1,m+n]\to\mathbb Z_2,i\mapsto \widehat i$ by
$$
\widehat i= \begin{cases}
             0, & \mbox{  if  } 1\leq i\leq m;\\
            1, & \mbox{  if  } m+1\leq i\leq m+n.
           \end{cases}
$$
We will always regard $\mathbb Z_2=\{0,1\}$ as a subset of $\mathbb N$ unless it is used for the grading of a super structure. For any superspace $V$ and a homogeneous element $v\in V$, we often use $\widehat v$ to denote its parity.

Let $\{ \bfe_1,\bfe_2,\cdots, \bfe_{m+n}\}$ be the  standard basisfor $\mathbb Z^{m+n}$, and
define the ``super dot product'' on $\mathbb Z^{m+n}$ by
\begin{equation}\label{superdp}
\bfe_i*\bfe_j=(\bfe_i,\bfe_j)_s=(-1)^{\widehat{i}}\delta_{ij}.
\end{equation}
Let $\mathbb Q(\up)$ be the field of rational functions in indeterminate $\up$ and let
$$\up_h=\up^{(-1)^{\hat h}}\quad(h\in[1, m+n]),
\quad[a]^!=[1][2]\cdots[a],\quad[i]=\frac{\up^i-\up^{-i}}{\up-\up^{-1}}\;\;(a\in\mathbb N).$$
Let $[i]_q$ denote the value at $q$.

Define the super (or graded) commutator on the homogeneous elements
$X,Y$ for an (associative) superalgebra by
$$
[X,Y]=[X,Y]_s=XY-(-1)^{\widehat X \widehat Y}YX.
$$
   The following quantum enveloping superalgebra $U_\up(\mathfrak {gl}_{m|n})$
is defined in \cite{zhang}.
\begin{definition}\label{Uglmn}
(1) The quantum enveloping superalgebra $U_\up(\mathfrak{gl}_{m|n})$
over $ \mathbb Q(\up)$  is generated by
\[\begin{cases}
\text{even generators: }&E_{h}, F_{h},  K_{j}^{\pm  1},\;1\leq h,j\leq m+n,h\neq m, m+n;\\
\text{odd generators: }&E_m,F_m.\end{cases}
\]
These elements are subject to the following relations:
\begin{enumerate}
\item[(QG1)]
$ K_aK_b=K_bK_a,\ K_aK_a^{-1}=K_a^{-1}K_a=1; $
\item[(QG2)]
$ K_aE_{b}=\up^{(\varepsilon_{a}, \alpha_{b})_s}E_{b}K_a,\
K_aF_{b}=\up^{(\varepsilon_{a}, -\alpha_{b})_s}F_{b}K_a; $
\item[(QG3)]
$ [E_{a},
F_{b}]=\delta_{a,b}\frac{K_aK_{a+1}^{-1}-K_a^{-1}K_{a+1}}{\up_a-\up_a^{-1}};
$
\item[(QG4)]
$ E_{a}E_{b}=E_{b}E_{a}, \  F_{a}F_{b}=F_{b}F_{a},$ if $|a-b|>1 ;$
\item[(QG5)]
For $ a\neq m$ and $|a-b|=1,$
\begin{equation}
\begin{aligned}\notag
&E_{a}^2E_{b}-(\up_a+\up_a^{-1})E_{a}E_{b}E_{a}+E_{b}E_{a}^2=0,\\
&F_{a}^2F_{b}-(\up_a+\up_a^{-1})F_{a}F_{b}F_{a}+F_{b}F_{a}^2=0;
\end{aligned}
\end{equation}
\item[(QG6)]
$E_m^2=F_m^2=[E_{m}, E_{m-1,m+2}]=[F_{m}, E_{m+2,m-1}]=0,$ where
\begin{equation}
\begin{aligned}\notag
E_{m-1,m+2}&=E_{m-1}E_{m}E_{m+1}-\up E_{m-1}E_{m+1}E_{m} -\up^{-1}
E_{m}E_{m+1}E_{m-1}+
E_{m+1}E_{m}E_{m-1},\\
E_{m+2,m-1}&=F_{m+1}F_{m}F_{m-1}-\up^{-1} F_{m}F_{m+1}F_{m-1} -\up
F_{m-1}F_{m+1}F_{m}+
F_{m-1}F_{m}F_{m+1}.\\
\end{aligned}
\end{equation}
\end{enumerate}

(2) Let $\bbmk$ be a field and let $q\in \bbmk$, $q\neq0$ and $q^2\neq1$. Then, with $\up$ replaced by $q$, we may similarly define the quantum group $U_q(\mathfrak {gl}_{m|n})$ over $\bbmk$ (see \cite{jc, zhang}).
\end{definition}
Note that, if $n=0$, then (QG1)--(QG5) form a presentation for the quantum group $U_\up(\mathfrak {gl}_{m})$.

A Hopf algebra structure  on $U_\up(\mathfrak {gl}_{m|n})$ is
defined  (see \cite[Section II]{zhang}) by:
\begin{equation}\label{coalg}
\begin{aligned}
& \Delta(K_i)=K_i\otimes K_i,\\
&\Delta(E_i)=E_i\otimes {\widetilde K}_{i} +1\otimes E_i, \quad
   \Delta(F_i)=F_i\otimes 1+{\widetilde K}_i^{-1}\otimes F_i ,\\
& \varepsilon(K_i)=1,  \quad  \varepsilon(E_i)= \varepsilon(F_i)=0,\\
& S(K_i)=K_i^{-1}, \quad S(E_i)=-E_i {\widetilde K}_{i}^{-1}, \quad
S(F_i)=-{\widetilde K}_{i} F_i,
\end{aligned}
\end{equation}
where ${\widetilde K}_{i}=K_iK_{i+1}^{-1}.$

Representations of $U_\up(\mathfrak {gl}_{m|n})$ have been investigated in \cite{zhang} (see also \cite{DLZ} for representations of its hyperalgebra at roots of unity).
We will need two special $U_\up(\mathfrak {gl}_{m|n})$-supermodules in the next section for our construction. They are built on the following two $U_\up(\mathfrak {gl}_N)$-modules defined by
quantum differential operators.

\begin{example}\label{QDO}
Let $V$ be a vector space over a field $\bbmk$ of dimension $N$ and let  $\bbmk[x_1,x_2,\cdots,x_N]$ be the polynomial algebra over $\bbmk$ in indeterminates $x_1,\ldots,x_N$.

(1) Let $S(V)$ be the {\it symmetric algebra} on $V$, identified as  $S(V)=\bbmk[x_1,x_2,\cdots,x_N]$. Following \cite[5A.6]{jc}, we define {\it quantum differential operators} $\mathcal D_i:S(V)\to S(V)$ by
$$\mathcal D_i(x_1^{a_1}x_2^{a_2}\cdots x_N^{a_N})=\begin{cases}[a_i]_qx_1^{a_1}x_2^{a_2}\cdots x_i^{a_i-1}\cdots  x_N^{a_N},&\text{if }a_i\geq1;\\
0,&\text{otherwise}.
\end{cases}$$
We also introduce algebra automorphism $\mathcal K_i:S(V)\to S(V)$ by setting
$$\mathcal K_i(x_1^{a_1}x_2^{a_2}\cdots x_N^{a_N})=q^{a_i}x_1^{a_1}x_2^{a_2}\cdots x_N^{a_N}.$$
Let $\mathcal E_i=x_i\circ\mathcal D_{i+1}$ and $\mathcal Fi=x_{i+1}\circ\mathcal D_{i}$. Then, by \cite[Prop. 5A.6]{jc},
 the following map
$$E_i\longmapsto\mathcal E_i,\;\; F_i\longmapsto\mathcal F_i,\;\; K_j\longmapsto\mathcal K_j$$
for all $1\leq i,j\leq N$ ($i\neq N$) defines an algebra  homomorphism from $U_q(\mathfrak {gl}_N)$ to the endomorphism algebra of $S(V)$. Hence, $S(V)$ becomes a $U_q(\mathfrak {gl}_N)$-module (cf. \cite[Thm 4.1(A)]{H}).

(2) Let $\La(V)$ be the {\it exterior algebra} on $V$. In this case, we may identify $\La(V)$ with the Grassman superalgebra
$\La(d_1,\ldots,d_N)$ with {\it odd} generators $d_1,\ldots,d_N$ and relations
$$d_i^2=0\; (1\leq i\leq N),\quad d_id_j=-d_jd_i\;(1\leq i\neq j\leq N).$$
Thus, $\La(V)$ has a basis $d^\bfa:=d_1^{a_1}\cdots d_N^{a_N}$, $\bfa\in\mathbb Z_2^N$.
Define a $U_q(\mathfrak {gl}_N)$-action on $\La(V)$ by
$$K_i.d^\bfa=q^{a_i}d^\bfa,\;\;E_h .d^\bfa=\begin{cases} d^{\bfa+\al_h},&\text{if }a_{h+1}>0;\\
0,&\text{otherwise,}\end{cases}\;\;F_h. d^\bfa=\begin{cases} d^{\bfa-\al_h},&\text{if }a_{h}>0;\\
0,&\text{otherwise,}\end{cases}$$
for all $1\leq h,i\leq N$, $h\neq N$. It is direct to check that all relations (QG1-5) are satisfied. Hence, $\La(V)$ becomes a $U_q(\mathfrak {gl}_N)$-module (cf. \cite[\S\S2,4]{H}).
\end{example}

\section{The polynomial superalgebra $\sSmn(V)$ as a $U_\up(\mathfrak{gl}_{m|n})$-supermodule}
We generalize the constructions of the module structures on
symmetric and exterior algebras to the supergroup
$U_\up(\mathfrak{gl}_{m|n}).$

Consider the natural representation on the superspace $V=V_0\oplus V_1$ of $\mathfrak{gl}_{m|n}(\bbmk)$ where $\dim V_0=m$ and $\dim V_1=n$. We will consider two superalgebras in the notation of Example \ref{QDO}:
$$\aligned
S(V_0)\otimes \La(V_1)&= \bbmk[x_1,\ldots,x_m,d_1,\ldots,d_n],\\
\La(V_0)\otimes S(V_1)&= \bbmk[d_1,\ldots,d_m,x_1,\ldots,x_n].\endaligned$$
These are known as {\it polynomial superalgebras} with even generators $x_i$ and odd generators $d_j$. By Example \ref{QDO}, both algebras are also $U_q(\mathfrak{gl}_{m})\otimes U_{q}(\mathfrak{gl}_{n})$-modules

We now assume $\bbmk=\mathbb Q(\up)$. In order to introduce supermodule structure for $U_\up(\mathfrak{gl}_{m|n})$,
we set
\begin{equation}
\begin{aligned}
&\mathcal S_{0|1}=\mathcal S_{0|1}(V):=\mathbb Q(\up)[X_1,X_2,\cdots, X_{m+n}]\text{ with }X_i=x_i, X_{m+j}=d_j,\\
&\mathcal S_{1|0}=\mathcal S_{1|0}(V):=\mathbb Q(\up)[X_1,X_2,\cdots, X_{m+n}]\text{ with }X_i=d_i, X_{m+j}=x_j,
\end{aligned}
\end{equation}
where $1\leq i\leq m,1\leq j\leq n$.
We use divided powers to denote their monomial bases:
$$
 X^{(\bfa)}=X_{1}^{(a_{1})}X_{2}^{(a_{2})}\cdots X_{m+n}^{(a_{m+n})},
$$
where $\bfa=(a_1,\cdots, a_{m+n})\in \mathbb N^{m}\times\mathbb Z_2^n$ for $\mathcal S_{0|1}$, $\bfa=(a_1,\cdots, a_{m+n})\in\mathbb Z_2^m\times \mathbb N^{n}$ for $\mathcal S_{1|0}$,
and $ X_i^{(a_i)}=\frac{X_i^{a_i}}{[a_i]!}$.

For the superspace structure, we have, for $i\in\mathbb Z_2$,
 ${X^{(\bfa)}}\in(\mathcal S_{0|1})_i$ if and only if $\widehat\bfa:=\sum_{j=1}^na_{m+j}\equiv i(\text{mod}\; 2)$, while ${X^{(\bfa)}}\in(\mathcal S_{1|0})_i$ if and only if $\widehat\bfa:=\sum_{j=1}^ma_{j}\equiv i(\text{mod}\; 2)$.

%

 As algebras, both $\mathcal S_{0|1}$ and $\mathcal S_{1|0}$ have a graded structure $\mathcal S_{0|1}(r)$ and $\mathcal S_{1|0}(r)$ for all $r\geq0$, where $\mathcal S_{0|1}(r)$ (resp., $\mathcal S_{1|0}(r)$) is the $r$-th homogeneous component spanned by all $X^{(\bfa)}$ with $\deg(X^{(\bfa)})=r$.  Here $\deg(X^{(\bfa)}):=\sum_{i=1}^{m+n}a_i$.

We now define the following actions on $\mathcal S_{0|1}$ and $\mathcal S_{1|0}$ by the same rules:
\begin{equation}\label{eaction}
\aligned
K_i. X^{(\bfa)}&=\ups_i^{a_i}X^{(\bfa)},\\
E_h. X^{(\bfa)}&=\left\{\begin{aligned}
 &[a_h+1]X^{(\bfa+\alpha_{h})}, & \mbox{if } a_{h+1}>0;\\
 & 0,                          &\mbox{otherwise}.
\end{aligned}
\right.\\
F_h. X^{(\bfa)}&=\left\{\begin{aligned}
& [a_{h+1}+1]X^{(\bfa-\alpha_{h})},&\mbox{if } a_{h}>0;\\
&0, &\mbox{otherwise}.
\end{aligned}
\right.,\\
\endaligned
\end{equation}
where $1\leq h,i\leq m+n$, $h\neq m+n$, and $\alpha_h=\bse_h-\bse_{h+1}$ are ``simple roots''.
Note that, for the even generators, the action above (on quantum divided powers) coincides with those given in Example \ref{QDO}.
Thus, both $\mathcal S_{0|1}$ and $\mathcal S_{1|0}$ are $U_\up(\mathfrak{gl}_{m})\otimes U_{\up^{-1}}(\mathfrak{gl}_{n})$-modules under the action above.
%
%

\begin{lemma}\label{polmod}
Both $\mathcal S_{0|1} $ and $\mathcal S_{1|0}$ are $U_\up(\mathfrak{gl}_{m|n})$-supermodules under the actions
above. In particular, their homogeneous components  $\mathcal S_{0|1}(r), \mathcal S_{1|0}(r)
,r\geq 0$ are all subsuperbmodules.
\end{lemma}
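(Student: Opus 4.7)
The plan is to verify that the formulas (3.2) satisfy each defining relation (QG1)--(QG6) of Definition \ref{Uglmn} on the divided-power basis $\{X^{(\bfa)}\}$, and to confirm the action is parity-preserving. Throughout, one uses the convention that $X^{(\bfb)}=0$ as soon as any component of $\bfb$ labelling an odd variable exceeds $1$, which is forced by $X_i^2=0$ for such $X_i$. The parity requirement is immediate from (3.2): each $K_i$ acts diagonally, $E_h$ and $F_h$ with $h\ne m$ change no odd-variable exponent, while $E_m$ and $F_m$ each flip exactly one odd-variable exponent between $0$ and $1$. This matches the declared parities in Definition \ref{Uglmn}.

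A large block of relations comes for free from Example \ref{QDO}. Indeed, the subalgebra generated by $K_1,\ldots,K_m$ and $E_h,F_h$ for $1\le h\le m-1$ acts on the $S(V_0)$ or $\Lambda(V_0)$ factor exactly as in Example \ref{QDO}, while the subalgebra generated by $K_{m+1},\ldots,K_{m+n}$ and $E_{m+h},F_{m+h}$ for $1\le h\le n-1$ acts on the complementary factor as $U_{\up^{-1}}(\mathfrak{gl}_n)$ (the $\up\leftrightarrow\up^{-1}$ twist is encoded by $\up_{m+j}=\up^{-1}$ in (3.2)). Because these two sets of generators act on disjoint variables, they commute elementwise. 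This disposes of every instance of (QG1)--(QG5) in which no index equals $m$.

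What still needs verification is (QG2) for $b=m$, (QG3) in every case where $\{a,b\}$ meets $\{m\}$, the Serre relations of (QG5) with $a\ne m$ and $b=m$, and all of (QG6). Each is a finite mechanical computation. For (QG2) with $b=m$, one checks the exponent match $(\bse_a,\alpha_m)_s = \delta_{a,m}+\delta_{a,m+1}$ against the $\up_a^{\delta_{a,m}-\delta_{a,m+1}}$ produced by pulling $K_a$ through $E_m$, using $\up_m=\up$ and $\up_{m+1}=\up^{-1}$. For the anticommutator in (QG3) with $a=b=m$, two applications of (3.2) give, after a short case split according to whether $a_m$ or $a_{m+1}$ is an odd-variable component, $(E_mF_m+F_mE_m)X^{(\bfa)}=[a_m+a_{m+1}]X^{(\bfa)}$, which coincides with $\tfrac{K_mK_{m+1}^{-1}-K_m^{-1}K_{m+1}}{\up-\up^{-1}}X^{(\bfa)}$ since $\up_m=\up$. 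The vanishing $[E_a,F_b]_s=0$ for $a\ne b$, the commutations (QG2) for $b=m$ with $F$ in place of $E$, and the Serre relations of (QG5) straddling $m$ are all parallel.

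The main obstacle is (QG6). The squares $E_m^2=F_m^2=0$ hold because after one application of $E_m$ (resp.\ $F_m$) the triggering index $a_{m+1}$ (resp.\ $a_m$) has been driven out of the admissible range, supplemented on the complementary algebra by the Grassmann vanishing convention. The two quartic relations $[E_m,E_{m-1,m+2}]=0$ and $[F_m,E_{m+2,m-1}]=0$ demand that one expand each four-letter product term by term on a general $X^{(\bfa)}$, keep track of the scalars $[a_*+1]$ emitted at each step along with the $\up^{\pm 1}$-weights built into the definitions of $E_{m-1,m+2}$ and $E_{m+2,m-1}$, and verify that everything telescopes to zero once the odd-variable constraints are imposed. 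The particular $\up$-weights in Definition \ref{Uglmn} are precisely those needed to make this cancellation work, but the bookkeeping is the longest part of the proof. Finally, since every defining relation preserves the total degree $\deg X^{(\bfa)}=\sum_i a_i$, each homogeneous piece $\mathcal S_{0|1}(r)$ and $\mathcal S_{1|0}(r)$ is stable under the action, yielding the ``in particular'' assertion.
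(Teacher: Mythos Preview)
Your proposal is correct and follows essentially the same strategy as the paper: reduce the relations not involving the odd index $m$ to Example~\ref{QDO}, then check (QG2), (QG3), (QG5), (QG6) at $m$ by hand. Your unified formula $(E_mF_m+F_mE_m)X^{(\bfa)}=[a_m+a_{m+1}]X^{(\bfa)}$ is a tidy repackaging of the paper's three-case split; note however that the paper actually writes out the quartic (QG6) computation term by term, whereas you only describe it.
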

\begin{proof}We only need to verify the
defining relations that involve the odd generators.

We only prove the case  for $\mathcal S_{0|1}.$ It is easy to verify the relations
(QG2) and (QG4). Note that the actions of $E_m, F_m$  is consistent with
 those for even generators, so (QG5) holds.
It remains to check (QG3) and (QG6).

The relations $[E_m,E_b]=0=[E_b,F_m]$ with $m\neq b$ in (QG3) are clear. Assume now $a=b=m$.
Let $\bfa=(a_1,\cdots, a_{m+n})\in \mathbb N^{m}\times\mathbb Z_2^{n}.$
If $a_{m+1}=1$ then
\begin{equation}\notag
\begin{aligned}
& (E_mF_m+F_mE_m).X^{(\bfa)}=F_mE_m.X^{(\bfa)}=[a_m+1]F_mX^{(\bfa+\alpha_{m})}=[a_m+1]X^{(\bfa)}.\\
&\frac{K_mK_{m+1}^{-1}-K_m^{-1}K_{m+1}}{\up-\up^{-1}}.X^{(\bfa)}
=\frac{\up^{a_m+1}-\up^{-a_m-1}}{\up-\up^{-1}}.X^{(\bfa)}=[a_m+1]X^{(\bfa)}.\\
\end{aligned}
\end{equation}
If $a_{m+1}=0, a_m>0$ then
\begin{equation}\notag
\begin{aligned}
 (E_mF_m+F_mE_m).X^{(\bfa)}&=E_mF_m.X^{(\bfa)}=E_mX^{(\bfa-\alpha_{m})}=[a_m]X^{(\bfa)}.\\
&=\frac{K_mK_{m+1}^{-1}-K_m^{-1}K_{m+1}}{\up-\up^{-1}}.X^{(\bfa)}.\\
\end{aligned}
\end{equation}
If $a_{m+1}=0, a_m=0$ then
\begin{equation}\notag
\begin{aligned}
& (E_mF_m+F_mE_m).X^{(\bfa)}=0=\frac{K_mK_{m+1}^{-1}-K_m^{-1}K_{m+1}}{\up-\up^{-1}}.X^{(\bfa)} .
\end{aligned}
\end{equation}
So, in all three cases, we obtain
$$(E_mF_m+F_mE_m).X^{(\bfa)}=\frac{K_mK_{m+1}^{-1}-K_m^{-1}K_{m+1}}{\up-\up^{-1}}.X^{(\bfa)},$$
for all $\bfa\in \mathbb N^{m}\times\mathbb Z_2^{n}$, proving (QG3).

Finally,  we prove the four relations in (QG6).
As $a_{m+1}\leq 1$, we have
$E_m^2.X^{(\bfa)}=0=F_m^2.X^{(\bfa)}$ for all $\bfa$. For the other two relations,
if $a_{m+1}=1, a_{m+2}=1$ then
\begin{equation}\notag
\begin{aligned}
 E_mE_{m-1,m+2}.X^{(\bfa)}
&=(E_mE_{m-1}E_{m}E_{m+1}-\up E_mE_{m-1}E_{m+1}E_{m}\\
&\qquad-\up^{-1}E_mE_{m}E_{m+1}E_{m-1}+E_mE_{m+1}E_{m}E_{m-1})X^{(\bfa)}\\
&=(-\up E_mE_{m-1}E_{m+1}E_{m}
+E_mE_{m+1}E_{m}E_{m-1})X^{(\bfa)}\\
&=(-\up
[a_{m}+1][a_{m-1}+1][a_{m}+1]+[a_{m-1}+1][a_{m}][a_{m}+1])X^{(\bfa')},
\end{aligned}
\end{equation}
where $\bfa'=\bfa+\alpha_{m-1}+2\alpha_m+\alpha_{m+1}=\bfa+\bfe_{m-1}+\bfe_{m}-\bfe_{m+1}-\bfe_{m+2}$. On the other hand,
\begin{equation}\notag
\begin{aligned}
E_{m-1,m+2}E_m.X^{(\bfa)}
&=(E_{m-1}E_{m}E_{m+1}E_m-\up E_{m-1}E_{m+1}E_{m}E_m\\
&\qquad-\up^{-1}E_{m}E_{m+1}E_{m-1}E_m+E_{m+1}E_{m}E_{m-1}E_m)X^{(\bfa)}\\
&=(E_{m-1}E_{m}E_{m+1}E_m-\up^{-1}E_{m}E_{m+1}E_{m-1}E_m)X^{(\bfa)}\\
&=([a_{m}+1][a_{m}+2][a_{m-1}+1]-\up^{-1}[a_{m}+1][a_{m-1}+1][a_{m}+1])X^{(\bfa')}
\end{aligned}
\end{equation}
Since $[a_{m}]+[a_{m}+2]-(\up+\up^{-1})[a_{m}+1]=0$, it follows that
\begin{equation}\notag
\begin{aligned}
&( E_{m-1,m+2}E_m+E_mE_{m-1,m+2}).X^{(\bfa)}\\
=&[a_{m-1}+1][a_{m}+1]([a_{m}]+[a_{m}+2]-(\up+\up^{-1})[a_{m}+1])X^{(\bfa')}=0.
\end{aligned}
\end{equation}
 If $a_{m+1}=0$ or $a_{m+2}=0$ then
$E_{m-1,m+2}E_m.X^{(\bfa)}=0=E_mE_{m-1,m+2}.X^{(\bfa)}$ by the
definition of the actions. The last case can be proved similarly. This proves (QG6).
\end{proof}
The following result is a super analog of a result stated at the end of \cite[5A.7]{jc} (see also \cite[Thms 4.1(A), 4.2]{H}). Recall from, say, \cite{DLZ} that irreducible weight $U_\up(\mathfrak{gl}_{m|n})$-modules are indexed by
$$\mathbb N^{m|n}_{++}=\{\la\in\mathbb N^{m+n}\mid \la_1\geq\cdots\geq\la_m,\la_{m+1}\geq\cdots\geq\la_{m+n}\}.$$
\begin{corollary}
Let $\Delta(r\bfe_1)$ (resp., $\nabla(r\bfe_{m+n})$) be the irreducible weight $U_\up(\mathfrak{gl}_{m|n})$-module of highest (resp., lowest) weight $r\bfe_1$ (resp., $r\bfe_{m+n}$). Then, there are $U_\up(\mathfrak{gl}_{m|n})$-module isomorphisms:
$$\mathcal S_{0|1}(r)\cong \Delta(r\bfe_1),\quad \mathcal S_{1|0}(r)\cong \nabla(r\bfe_{m+n}).$$
\end{corollary}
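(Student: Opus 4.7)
The plan is to identify $\mathcal S_{0|1}(r)$ as the irreducible highest weight $U_\up(\mathfrak{gl}_{m|n})$-supermodule of highest weight $r\bfe_1$ and $\mathcal S_{1|0}(r)$ as the irreducible lowest weight module of lowest weight $r\bfe_{m+n}$; the classification of irreducible weight modules cited just above the statement then yields the stated isomorphisms. By Lemma \ref{polmod}, both spaces are weight modules with one-dimensional $K_i$-weight spaces $\mathbb Q(\up) \cdot X^{(\bfa)}$ indexed by $\bfa$ with $\sum_i a_i = r$, so for each it suffices to prove (i) uniqueness up to scalar of a highest (resp.\ lowest) weight vector and (ii) cyclicity on that vector.

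For $\mathcal S_{0|1}(r)$, take $v := X_1^{(r)} = x_1^{(r)}$. From \eqref{eaction}, $v$ has weight $r\bfe_1$ and $E_h.v = 0$ for every $h \neq m+n$, since the only nonzero coordinate of $r\bfe_1$ is $a_1$, forcing $a_{h+1} = 0$ for all $h \geq 1$. Uniqueness amounts to showing that any $\bfa$ with $E_h.X^{(\bfa)} = 0$ for all $h \neq m+n$ satisfies $\bfa = r\bfe_1$. For $1 \leq h \leq m-1$, the scalar $[a_h+1]$ is nonzero in $\mathbb Q(\up)$ and $X^{(\bfa+\al_h)}$ is a valid nonzero basis element, so $a_{h+1} = 0$; the case $h = m$ gives $a_{m+1} = 0$ by the same reasoning. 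For $h \geq m+1$, once $a_{m+1} = \cdots = a_h = 0$ is established inductively, the $\mathbb Z_2$-boundary obstruction $a_h = 1$ (which would make $X^{(\bfa+\al_h)} = 0$ for trivial reasons) is absent, so $E_h.X^{(\bfa)} = 0$ genuinely forces $a_{h+1} = 0$. Hence $\bfa = r\bfe_1$, and any nonzero weight-graded submodule $W$ must contain $v$ by choosing a weight maximal in $W$ in the dominance order.

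Cyclicity is then a downward induction on the dominance order: for $\bfa \neq r\bfe_1$ of total degree $r$, let $j$ be the smallest index $\geq 2$ with $a_j > 0$ and set $h := j-1$. Then $\bfa' := \bfa + \al_h$ still lies in $\mathbb N^m \times \mathbb Z_2^n$ (when $h \geq m+1$, minimality of $j$ forces $a_h = 0$, so $a_h + 1 = 1 \in \mathbb Z_2$), and $F_h.X^{(\bfa')} = [a_j]\, X^{(\bfa)}$ by \eqref{eaction} with $[a_j] \neq 0$. This places $X^{(\bfa)}$ in the submodule generated by $v$, proving $\mathcal S_{0|1}(r) = U_\up^-.v \cong \Delta(r\bfe_1)$. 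The argument for $\mathcal S_{1|0}(r)$ is the mirror image, using $v' := X_{m+n}^{(r)} = x_n^{(r)}$ as the unique lowest weight vector (of weight $r\bfe_{m+n}$), with the roles of $E_h$ and $F_h$ swapped and extremality measured in the opposite order. I expect the main obstacle to be the uniqueness step near the odd simple root, where the $\mathbb Z_2$-constraint causes the $E_h$-action to vanish for two distinct reasons, making the three cases $h < m$, $h = m$, $h > m$ genuinely different from the purely even setting.
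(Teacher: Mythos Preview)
Your proof is correct and follows essentially the same strategy as the paper's: identify $X^{(r\bfe_1)}$ as the highest weight vector, show cyclicity by applying $F_h$'s, and establish irreducibility by climbing back up with $E_h$'s. The only difference is packaging---the paper writes down a single explicit monomial $F_1^{(a_2)}\cdot F_2^{(a_3)}F_1^{(a_3)}\cdots F_{m+n-1}^{(a_{m+n})}\cdots F_1^{(a_{m+n})}$ sending $X^{(r\bfe_1)}$ to $X^{(\bfa)}$ and then reverses it, whereas you do the same descent one simple root at a time and phrase irreducibility via uniqueness of the highest weight vector; your careful inductive handling of the $\mathbb Z_2$-obstruction for $h>m$ is precisely the super subtlety implicit in the paper's formula.
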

\begin{proof}
Let $\lambda=r\bse_1\in \mathbb N^{m|n}_{++}$. Then
$X^{(\lambda)}\in \mathcal S_{0|1}(r)$ is a highest weight vector, since, for any
$\bfa=(a_1,\cdots, a_{m+n})\in \mathbb N^{m}\times\mathbb Z_2^n$ with $|\bfa|=r$,
$r\bse_1-\bfa=a_2(\bse_1-\bse_2)+a_3(\bse_1-\bse_3)+\cdots+a_{m+n}(\bse_1-\bse_{m+n})$ and
$$
(F_1^{(a_2)}\cdot F_2^{(a_3)}F_1^{(a_3)}\cdot\cdots\cdot F_{m+n-1}^{(a_{m+n})}\cdots F_{1}^{(a_{m+n})}).X^{(\lambda)}=X^{(\bfa)}.
$$
Hence, $\mathcal S_{0|1}(r)$ is generated by an highest weight vector. On the other hand, a reversed sequence in the $E_i^{(a_i)}$'s send $X^{(\bfa)}$ back to $X^{(\la)}$. Thus, $\mathcal S_{0|1}(r)$ is irreducible.
The proof for $\mathcal S_{1|0}(r)$ is similar.
\end{proof}
Consider the tensor product
\begin{equation}\label{sSmn}
\sSmn=\sSmn(V)=(\mathcal S_{0|1})^{\otimes m}\otimes(\mathcal S_{1|0})^{\otimes n}\cong\mathbb Q(\up)[X_{i,j}]_{1\leq i,j\leq m+n},
\end{equation}
where $X_{i,j}$ denotes the $i$th generator of the $j$-th tensor factor. Thus,
we may regard $\sSmn$ as the polynomial superalgebra as indicated by the right hand side of \eqref{sSmn}, which has even generators $X_{i,j}$ for all $i,j$ with {  $ \hat{i}+\hat{j}=0$} and odd generators $X_{i,j}$ for all $i,j$ with
  $ \hat{i}+\hat{j}=1$.
In particular, we may describe the monomial basis for $\sSmn$ in terms of the following matrix set:
\begin{equation}\label{blocks}
M(m|n)=\Bigg\{{X\;\;Q\choose Q'\;\; Y}\Bigg|\;{X\in M_m(\mathbb N),Q\in M_{m\times n}(\mathbb Z_2)\atop Q'\in M_{n\times m}(\mathbb Z_2), Y\in M_n(\mathbb N)}\Bigg\}.
\end{equation}
 For $A=(a_{i,j})\in M(m|n),$ let
 $${\bfc_{i}}=\bfc_i(A)=(a_{1,i},a_{2,i},\ldots,a_{m+n,i})$$ be the $i$-th column of $A$ and let
\begin{equation}\notag
X^{[A]}
 :=X^{(\bfc_{1})}\otimes X^{(\bfc_{2})} \cdots \otimes
 X^{(\bfc_{m+n})}.
\end{equation}
The parity of $X^{[A]}$ is given by $\widehat A:=\sum_{\widehat i+\widehat j=1}a_{i,j}$.

Via the coalgebra structure \eqref{coalg} of $U_\up(\mathfrak{gl}_{m|n})$,  $\sSmn$ becomes a $U_\up(\mathfrak{gl}_{m|n})$-module (see the lemma below). 
Recall also the sign rule: for supermodules $V_1, V_2$ over a superlagbera $U$, if $u_1, u_2\in U,
v_i\in V_i$ with $u_2,v_1$ homogeneous, then
\begin{equation*}
\begin{aligned}\label{signte}
(u_1\otimes u_2). (v_1\otimes
v_2)=(-1)^{\widehat{u_2}\,\widehat{v_1}}u_1v_1\otimes u_2v_2.
\end{aligned}
\end{equation*}

For $A\in M(m|n),$ $i\in[1,m+n]$, let
\begin{equation}\label{sigmak}
\sigma(i,A)=
\begin{cases}
\displaystyle\sum_{s>m,t<i}a_{s,t},&\text{ if }1\leq i\leq m;\\
\displaystyle\sum_{s>m,t\leq
m}a_{s,t}+\sum_{s\leq    m,\, m< t<i}a_{s,t},&\text{ if }m+1\leq i\leq m+n,\\
\end{cases}
\end{equation}
and
\begin{equation}\label{sigcom}
\begin{aligned}
f(i,A)=\sum_{j>i} a_{h,j}-(-1)^{\delta_{h,m}}\sum_{j>i}a_{h+1,j},\\
g(i,A)=\sum_{j<i}a_{h+1,j}-(-1)^{\delta_{h,m}}\sum_{j<i}a_{h,j}.
\end{aligned}
\end{equation}
\begin{lemma}\label{genmul}
The set $\{ X^{[A]} \,|\, A\in M(m|n)\}$ forms a $\mathbb Q(\up)$-basis for the $U_\up(\mathfrak{gl}_{m|n})$-supermodule $\sSmn$ which has the following actions:
\begin{equation} \notag
\begin{aligned}
(1)\quad K_i.X^{[A]}&=\up_i^{\sum_{1\leq j\leq m}a_{i,j} }X^{[A]},\\
(2)\quad E_h. X^{[A]}&=\sum_{1\leq i\leq m+n\atop\, a_{h+1,i}\geq 1 }
 (-1)^{\sigma_{h,m}(i,A)}\upsh^{f(i,A)}[a_{h,i}+1]X^{[A+E_{h,i}-E_{h+1,i}]},\\
(3)\quad F_h. X^{[A]}&=\sum_{1\leq i\leq m+n\atop a_{h,i}\geq 1}
(-1)^{\sigma_{h,m}(i,A)}\upsi^{g(i,A)}[a_{h+1,i}+1]X^{[A-E_{h,i}+E_{h+1,i}]},\\
\end{aligned}
\end{equation}
where 
\begin{equation}\label{sighm}
\sigma_{h,m}(i,A)=\delta_{h,m}\sigma(i,A)=\begin{cases}\sigma(i,A),&\text{if }h=m;\\
0,&\text{if }h\neq m.\end{cases}
\end{equation}
\end{lemma}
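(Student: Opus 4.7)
The basis claim is immediate from the tensor-product definition (3.6): each factor $\mathcal{S}_{0|1}$ or $\mathcal{S}_{1|0}$ has the monomial basis $\{X^{(\bfc)}\}$ of Lemma \ref{polmod}, indexed by $\bfc\in\mathbb{N}^m\times\mathbb{Z}_2^n$ or $\mathbb{Z}_2^m\times\mathbb{N}^n$ respectively, so choosing one column per tensor factor yields precisely the matrix set $M(m|n)$ of (3.7). The $K_i$-action follows at once from $\Delta(K_i)=K_i\otimes K_i$: the iterated comultiplication acts diagonally, contributing $\up_i^{a_{i,k}}$ from each factor.

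For $E_h$, my plan is to apply iterated comultiplication. A straightforward induction using $\Delta(E_h)=E_h\otimes\widetilde{K}_h+1\otimes E_h$ gives
$$\Delta^{(m+n-1)}(E_h)=\sum_{k=1}^{m+n} 1^{\otimes(k-1)}\otimes E_h\otimes \widetilde{K}_h^{\otimes(m+n-k)}.$$
Applying the $k$-th summand to $X^{[A]}=\bigotimes_l X^{(\bfc_l)}$ via the super sign rule produces a sign $(-1)^{\widehat{E_h}\sum_{l<k}\widehat{X^{(\bfc_l)}}}$ from moving the (odd-if-$h=m$) operator $E_h$ past the first $k-1$ factors. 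When $h\neq m$ this sign is trivial; when $h=m$, an $\mathcal{S}_{0|1}$-column $\bfc_l$ (with $l\leq m$) has parity $\sum_{s>m}a_{s,l}$ while an $\mathcal{S}_{1|0}$-column $\bfc_l$ (with $l>m$) has parity $\sum_{s\leq m}a_{s,l}$, so summing over $l<k$ and splitting at $l=m$ recovers exactly $\sigma(k,A)$ of (3.10), giving the sign $(-1)^{\sigma_{h,m}(k,A)}$. The inner action $E_h.X^{(\bfc_k)}=[a_{h,k}+1]X^{(\bfc_k+\alpha_h)}$ (when $a_{h+1,k}\geq 1$, zero otherwise) yields the coefficient $[a_{h,k}+1]$, and on each $X^{(\bfc_l)}$ with $l>k$ the operator $\widetilde{K}_h=K_hK_{h+1}^{-1}$ has eigenvalue $\upsh^{a_{h,l}}\upsi^{-a_{h+1,l}}=\upsh^{a_{h,l}-(-1)^{\delta_{h,m}}a_{h+1,l}}$ after using the identity $\upsi=\upsh^{(-1)^{\delta_{h,m}}}$; multiplying over $l>k$ produces $\upsh^{f(k,A)}$. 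Collecting the three ingredients gives formula (2).

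The $F_h$ case is strictly parallel. From $\Delta(F_h)=F_h\otimes 1+\widetilde{K}_h^{-1}\otimes F_h$, iteration puts $\widetilde{K}_h^{-1}$ on the factors \emph{before} the one on which $F_h$ acts; the same parity count gives sign $(-1)^{\sigma_{h,m}(k,A)}$, the inner action supplies $[a_{h+1,k}+1]$, and the accumulated eigenvalue $\upsh^{-\sum_{l<k}(a_{h,l}-(-1)^{\delta_{h,m}}a_{h+1,l})}$ rewrites as $\upsi^{g(k,A)}$ by absorbing the sign $(-1)^{\delta_{h,m}}$ back into the base. The main obstacle throughout is the bookkeeping in the odd case $h=m$: one must both match the abstract parity sum $\sum_{l<k}\widehat{X^{(\bfc_l)}}\pmod 2$ against the piecewise combinatorial expression $\sigma(k,A)$, and convert consistently between $\upsh$- and $\upsi$-exponents via $\upsi=\upsh^{(-1)^{\delta_{h,m}}}$; once those two conversions are done cleanly, the three formulas drop out of the iterated coproduct verbatim.
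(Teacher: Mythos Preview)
Your proof is correct and follows essentially the same approach as the paper's own proof: compute the iterated comultiplication $\Delta^{(m+n-1)}$ of each generator, apply the super sign rule to identify the sign $(-1)^{\delta_{h,m}\sum_{l<k}\widehat{\bfc_l}}$ with $(-1)^{\sigma_{h,m}(k,A)}$ via the column-parity count, and read off the $\widetilde K_h^{\pm1}$-eigenvalues to obtain the exponents $f(k,A)$ and $g(k,A)$. Your write-up is in fact slightly more explicit than the paper's in spelling out the parity match with \eqref{sigmak} and the $\upsh/\upsi$ conversion, but the argument is the same.
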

\begin{proof}Let $\Delta^{(N)}=(\Delta\otimes\underbrace{1\otimes\cdots\otimes}_{N-1}1)\circ\cdots\circ(\Delta\otimes1)\circ\Delta$. Then, for  $N=m+n-1$,
\begin{equation}
\begin{aligned}\label{comult}
\Delta^{(N)}(K_i)&= K_i\otimes\cdots\otimes K_i,\\
\Delta^{(N)}(E_h)&= \sum_{i=1}^{m+n} \underbrace{1\otimes 1\cdots \otimes 1}_{i-1} \otimes
E_h\otimes {\widetilde K}_{h}\cdots\otimes
{\widetilde K}_{h},  \\
 \Delta^{(N)}(F_h)&= \sum_{i=1}^{m+n} {\widetilde K}_{h}^{-1}\cdots\otimes
{\widetilde K}_{h}^{-1}  \otimes F_h\otimes\underbrace{1\otimes 1\cdots \otimes
1}_{i-1}.
\end{aligned}
\end{equation}
Thus, by \eqref{eaction} and the sign rule (and, for $h=m$, noting $\up_m=\up,\up_{m+1}=\up^{-1}$),
\begin{equation} \notag
\begin{aligned}
E_h. X^{[A]}&=\Delta^{(N)}(E_h)X^{[A]}\\
&=  \sum_{i=1}^{m+n} (-1)^{\delta_{h,m}(\sum_{j<i} \widehat{\bfc_{j}})}
 (X^{(\bfc_{1})}\otimes\cdots \otimes X^{(\bfc_{i-1})} \otimes E_h. X^{(\bfc_{i})}\\
&\qquad\qquad\qquad\otimes {\widetilde K}_{h}. X^{(\bfc_{i+1})} \cdots \otimes
 {\widetilde K}_{h}.X^{(\bfc_{m+n})})
\\
&=\sum_{1\leq i\leq m+n\atop a_{h+1,i}\geq 1 }
 (-1)^{\sigma_{h,m}(i,A)}\upsh^{f(i,A)}[a_{h,i}+1]X^{[A+E_{h,i}-E_{h+1,i}]}.
\end{aligned}
\end{equation}
The  actions of $F_h, K_i$ can be proved  similarly.
\end{proof}
\begin{remark}We remark that these module formulas are easily obtained, but are the key to the determination of the regular representation of $U_\up(\mathfrak{gl}_{m|n})$. As a comparison, analogous formulas for quantum Schur superalgebras are certain multiplication formuas (see \cite[Props 4.4-5]{DG}) which are obtained by rather lengthy calculations.
\end{remark}
\section{The formal power series algebra $\wsSmn(V)$}
We now extend the module structure on $\sSmn$ to its formal power series algebra and then focus on a submodule which has a $U_\up(\mathfrak{gl}_{m|n})$-supermodule structure. We will   displayed explicitly the actions on a basis.

Recall from \eqref{sSmn} the polynomial superalgebra ${\sSmn}$ and its basis $\{X^{[A]}\}_{\mid A\in M(m|n)}$. By turning the direct sum of all $\mathbb Q(\up)X^{[A]}$ into a direct product, we obtain the {\it formal power series algebra}:
 \begin{equation}\label{wsSmn}
 {\wsSmn}=\wsSmn(V):=\prod_{A\in M(m|n)}\mathbb Q(\up)X^{[A]}\cong\mathbb Q(\up)[[X_{i,j}]]_{1\leq i,j\leq m+n}.
 \end{equation}
For clarity of the $U_\up(\mathfrak{gl}_{m|n})$-actions below, we continue to write the elements in
${\wsSmn}$ by infinite series in $X^{[A]}$'s. Natrually, the $U_\up(\mathfrak{gl}_{m|n})$-action on $\sSmn$ extends to $\wsSmn$ so that $\wsSmn$ becomes a $U_\up(\mathfrak{gl}_{m|n})$-module. We now construct a submodule on which a natural super structure can be built.

Let
$$M(m|n)^\mp=\{A=(a_{i,j})\in M(m|n)\mid  a_{i,i}=0\;\forall i\}.$$
For $\lambda\in\mathbb N^{m+n},A\in M(m|n)^{\mp}$, let $A+\la=A+\diag(\la)$
and, for $\bfj\in\mathbb Z^{m+n}$, define
\begin{equation}\label{A(j)}
 \SgAj=\sum_{\lambda\in \mathbb N^{m+n}}
 \ups^{\lambda* \bfj}X^{[A+\lambda]}
 \in {\wsSmn}.
\end{equation}

Let $\mathcal U(m|n)$ be the subspace of $\wsSmn$ spanned by $A(\bfj)$ for all $A\in M(m|n)^\mp,\bfj\in\mathbb Z^{m+n}$. Since every $X^{[A+\lambda]}$ in $A(\bfj)$ has parity $\widehat A$,
$\mathcal U(m|n)$ has a natural superspace structure $\mathcal U(m|n)=\mathcal U(m|n)_0\oplus\mathcal U(m|n)_1$. In the rest of the section, 
we will prove that $\mathcal U(m|n)$ is a $U_\up(\mathfrak{gl}_{m|n})$-supermodule.

Let $\alpha_h=\bfe_h-\bfe_{h+1}, \beta_h=\bfe_h+\bfe_{h+1}$, $\sigma_{h,m}(i)=\sigma_{h,m}(i,A)$, $f(i)=f(i,A),$ and $ g(i)=g(i,A)$ (see \eqref{sigcom} and \eqref{sighm}).

%
%
\begin{theorem}\label{multi}  The superspace $\mathcal U(m|n)$ is a $U_\up(\mathfrak{gl}_{m|n})$-submodule of $\wsSmn$ with basis
 $\{A(\bfj)\mid A\in M(m|n)^{\mp},\bfj\in \mathbb Z^{m+n}\}$ and the following explicit actions of $E_h,F_h,K_i$:
for $A=(a_{s,t}),\bfj=(j_s)$, $1\leq i\leq m+n$, and $1\leq h< m+n$,
\begin{equation}
\begin{aligned}\label{emulti}
 K_i. \SgAj&= \up_i^{\sum_{1\leq j\leq m+n}a_{i,j}} A(\bfj+\bfe_i),\\
E_h. \SgAj&
= \sum_{i>h+1;\, a_{h+1,i}\geq 1} (-1)^{\sigma_{h,m}(i)}\upsh^{f(i) }[a_{h,i}+1]  (A+E_{h,i}-E_{h+1,i})(\bfj)\\
&+
\sum_{i<h;\, a_{h+1,i}\geq 1}(-1)^{\sigma_{h,m}(i)}
\upsh^{f(i)}[a_{h,i}+1]
      (A+E_{h,i}-E_{h+1,i})(\bfj+\alpha_h)\\
&+\blacktriangleright(-1)^{\sigma_{h,m}(h)}\upsh^{f(h)-j_h
}\frac{(A-E_{h+1,h})(\bfj+\alpha_h)
          -(A-E_{h+1,h})(\bfj-\beta_h)}{\upsh-\upsh^{-1}}\\
&+(-1)^{\sigma_{h,m}(h+1)}\upsh^{f(h+1)
+(-1)^{\delta_{h,m}}j_{h+1}}[a_{h,h+1}+1](A+E_{h,h+1})(\bfj).
\end{aligned}
\end{equation}
\begin{equation}\label{fmulti}
\begin{aligned}
 F_h. \SgAj&
=  \sum_{i<h;\, a_{h,i}\geq 1}  (-1)^{\sigma_{h,m}(i,A)} \upsi^{g(i)}[a_{h+1,i}+1] (A-E_{h,i}+E_{h+1,i})(\bfj)\\
&+\sum_{i>h+1;\, a_{h,i}\geq 1}  (-1)^{\sigma_{h,m}(i,A)} \upsi^{g(i)
}[a_{h+1,i}+1]
     (A-E_{h,i}+E_{h+1,i})(\bfj-\alpha_h)\\
&+ (-1)^{\sigma_{h,m}(h,A)} \upsi^{g(h) +(-1)^{\delta_{h,m}}j_{h}}[a_{h+1,h}+1](A+E_{h+1,h})(\bfj)\\
&+\blacktriangleleft(-1)^{\sigma_{h,m}(h+1,A)} \upsi^{g(h+1)-j_{h+1}
}\frac{(A-E_{h,h+1})(\bfj-\alpha_h)
   -(A-E_{h,h+1})(\bfj-\beta_h)}{\upsi-\upsi^{-1}},
\end{aligned}
\end{equation}
where   $\blacktriangleright$ (resp., $\blacktriangleleft$) is 0 if $a_{h+1,h}=0$ (resp., $a_{h,h+1}=0$), and is 1 otherwise.


Moreover, it is a $U_\up(\mathfrak{gl}_{m|n})$-supermodule.
\end{theorem}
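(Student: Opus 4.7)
The proof plan is to apply Lemma \ref{genmul} termwise inside the defining series $A(\bfj) = \sum_\lambda \ups^{\lambda*\bfj}X^{[A+\lambda]}$ and then regroup the resulting infinite sums as finite linear combinations of basis elements $A'(\bfj')$ of $\mathcal U(m|n)$. Linear independence of $\{A(\bfj)\}$ is automatic: for a fixed $A\in M(m|n)^\mp$ the characters $\bfj\mapsto \ups^{\lambda*\bfj}$ on the semigroup $\mathbb N^{m+n}$ separate distinct $\bfj$, while different $A$'s give disjoint supports in the monomial basis $\{X^{[B]}\}$. The $K_i$-formula is immediate: $K_i.X^{[A+\lambda]} = \ups_i^{(\sum_j a_{i,j})+\lambda_i}X^{[A+\lambda]}$, and $\ups_i^{\lambda_i} = \ups^{\lambda*\bfe_i}$ absorbs as the shift $\bfj\mapsto \bfj+\bfe_i$. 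For $E_h$ the computation splits by column index $i$ into the four subcases $i>h+1$, $i<h$, $i=h$, and $i=h+1$.

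Two observations keep the bookkeeping clean. First, since $A+\lambda$ differs from $A$ only on the diagonal while $\sigma_{h,m}(i, A)$ from \eqref{sigmak} depends only on off-diagonal entries, one has $\sigma_{h,m}(i, A+\lambda) = \sigma_{h,m}(i, A)$ for every $\lambda$. Second, the $\lambda$-dependence of $f(i, A+\lambda)$ is entirely explicit from \eqref{sigcom}: this correction vanishes for $i\geq h+1$, equals $-(-1)^{\delta_{h,m}}\lambda_{h+1}$ for $i=h$, and equals $\lambda_h-(-1)^{\delta_{h,m}}\lambda_{h+1}$ for $i<h$. The uniform identity $\ups_h^{\lambda_h-(-1)^{\delta_{h,m}}\lambda_{h+1}} = \ups^{\lambda*\alpha_h}$ (verified by splitting $h<m$, $h=m$, $h>m$) allows these $\lambda$-powers to combine with $\ups^{\lambda*\bfj}$ as the clean shift $\bfj\mapsto\bfj+\alpha_h$. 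This yields at once the first two lines of \eqref{emulti}, in which $A' = A+E_{h,i}-E_{h+1,i}$ remains in $M(m|n)^\mp$.

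The main obstacle is the case $i=h$: the matrix $A+\lambda+E_{h,h}-E_{h+1,h}$ acquires a nonzero $(h,h)$ diagonal entry and so leaves $M(m|n)^\mp$. The remedy is to reindex $\lambda' = \lambda+\bfe_h$ and set $A' = A-E_{h+1,h}$, which requires $a_{h+1,h}\geq 1$ (recorded by the flag $\blacktriangleright$). The coefficient $[a_{h,h}+\lambda_h+1]$ becomes $[\lambda'_h]$, and the key manipulation is to expand
\begin{equation*}
[\lambda'_h] = \frac{\ups_h^{\lambda'_h}-\ups_h^{-\lambda'_h}}{\ups_h-\ups_h^{-1}},
\end{equation*}
so that the sum splits as the difference of two geometric series; the $\lambda'_h=0$ terms cancel, so the summation range extends harmlessly to all $\lambda'\in\mathbb N^{m+n}$ and one reads off the combination $\ups_h^{f(h,A)-j_h}(A'(\bfj+\alpha_h)-A'(\bfj-\beta_h))/(\ups_h-\ups_h^{-1})$ up to the $(-1)^{\sigma_{h,m}(h)}$ sign. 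The remaining subcase $i=h+1$ needs only the shift $\lambda'=\lambda-\bfe_{h+1}$, which extracts an overall factor $\ups^{\bfe_{h+1}*\bfj} = \ups_h^{(-1)^{\delta_{h,m}}j_{h+1}}$ and yields the last line of \eqref{emulti}. The $F_h$-formula \eqref{fmulti} follows by a strictly parallel argument in which the roles of $i=h$ and $i=h+1$ (and of $\alpha_h$ and $\beta_h$) are interchanged, with $\blacktriangleleft$ recording the analogous condition $a_{h,h+1}\geq 1$.

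Finally, the supermodule assertion reduces to a parity check. In each of the formulas, the new matrix $A'$ differs from $A$ by a combination of entry changes in row $h$ and/or row $h+1$; each such change at position $(r,c)$ shifts $\widehat{A}$ by $\hat r+\hat c\pmod 2$, and in every case the contributions add up to $\hat h+\hat{h+1}\pmod 2$, which equals $\delta_{h,m}$. Thus $E_h$ and $F_h$ shift parity by $\delta_{h,m}$, matching the $\mathbb Z_2$-grading of $U_\up(\mathfrak{gl}_{m|n})$ under which these generators are even for $h\neq m$ and odd for $h=m$, while $K_i$ obviously preserves parity. Hence the decomposition $\mathcal U(m|n) = \mathcal U(m|n)_0\oplus\mathcal U(m|n)_1$ is stable under the $U_\up(\mathfrak{gl}_{m|n})$-action, making $\mathcal U(m|n)$ a $U_\up(\mathfrak{gl}_{m|n})$-supermodule.
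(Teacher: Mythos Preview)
Your proposal is correct and follows essentially the same approach as the paper's proof: both apply Lemma~\ref{genmul} termwise to the series defining $A(\bfj)$, split the $E_h$-action according to the column index $i\in\{>h+1,\,<h,\,=h,\,=h+1\}$, use that $\sigma_{h,m}(i,A+\lambda)=\sigma_{h,m}(i,A)$ and the explicit $\lambda$-dependence of $f(i,A+\lambda)$, and handle the delicate $i=h$ case by expanding the Gaussian integer $[\lambda_h+1]$ and reindexing so that the resulting two series recombine as the difference in the $\blacktriangleright$-term. The only cosmetic differences are that the paper outsources linear independence to \cite[Prop.~4.1(2)]{DF2} whereas you sketch a direct support/character argument, and that you spell out the parity check for the supermodule claim while the paper simply asserts it follows from the formulas.
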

%
%
\begin{proof}The proof of linear independence is similar to that of \cite[Prop. 4.1(2)]{DF2}.

By Lemma \ref{genmul}(1),
\begin{equation}\notag
\begin{aligned}
K_i. \SgAj
&= \sum_{\lambda\in \mathbb N^{m+n}} \ups^{\lambda* \bfj}K_i.X^{[A+\lambda]}= \sum_{\lambda\in \mathbb N^{m+n}}
\ups^{\lambda* \bfj}
      \up_i^{\lambda_i+\sum_{j=1}^{m+n}a_{i,j}}X^{[A+\lambda]}\\
 &= \up_i^{\sum_{j=1}^{m+n}a_{i,j}}
     \sum_{\lambda\in \mathbb N^{m+n}} \ups^{\lambda* ({\bfj+\bfe_i})}
      X^{[A+\lambda]}= \up_i^{\sum_{j=1}^{m+n}a_{i,j}} A(\bfj+\bfe_i).
\end{aligned}
\end{equation}
Similarly, by Lemma \ref{genmul}(2),  and noting $\sigma(i,A)=\sigma(i,A+\la)$,
\begin{equation}\notag
\begin{aligned}
E_h. \SgAj
=& \sum_{\lambda\in \mathbb N^{m+n}} \ups^{\lambda* \bfj}E_h.X^{[A+\lambda]}\\
=&\sum_{\lambda\in \mathbb N^{m+n}}\, \sum_{1\leq i\leq m+n\atop
a_{h+1,i}\geq 1}
    \ups^{\lambda* \bfj }
        (-1)^{\sigma_{h,m}(i)} \upsh^{f(i, A+\lambda)}[a_{h,i}+1]
          X^{[A+\lambda+E_{h,i}-E_{h+1,i}]}\\
=&\sum_{1\leq i\leq m+n\atop
a_{h+1,i}\geq 1}\sum_{\lambda\in \mathbb N^{m+n}}
   (-1)^{\sigma_{h,m}(i)} \ups^{\lambda* \bfj }
        \upsh^{f(i, A+\lambda)}
          [a_{h,i}+1]X^{[A+\lambda+E_{h,i}-E_{h+1,i}]}\\
=&\Sigma_1+\Sigma_2+\Sigma_3+\Sigma_4,
\end{aligned}
\end{equation}
where
$$\aligned
\Sigma_1=&\sum_{i>h+1;\, a_{h+1,i}\geq 1}\sum_{\lambda\in \mathbb N^{m+n}}
         (-1)^{\sigma_{h,m}(i)}\ups^{\lambda* \bfj }  \ups_h^{f(i,A+\lambda) }[a_{h,i}+1]X^{[A+E_{h,i}-E_{h+1,i}+\lambda]},\\
         \Sigma_2=&\sum_{i<h;\, a_{h+1,i}\geq 1}\sum_{\lambda\in \mathbb N^{m+n}}
   (-1)^{\sigma_{h,m}(i)} \ups^{\lambda* \bfj }
        \upsh^{f(i, A+\lambda)}
          [a_{h,i}+1]X^{[A+\lambda+E_{h,i}-E_{h+1,i}]},\\
\Sigma_3=&\sum_{\lambda\in \mathbb N^{m+n}} (-1)^{\sigma_{h,m}(h)}
\ups^{\lambda* \bfj }
         \upsh^{f(h,A+\lambda)}[\lambda_{h}+1]X^{[A-E_{h+1,h}+\lambda+E_{h,h}]},\;\boxed{\text{if }a_{h+1,h}>0},\\
\Sigma_4=&\sum_{\lambda\in \mathbb N^{m+n},\la_{h+1}>0}
(-1)^{\sigma_{h,m}(h+1)}\ups^{\lambda* \bfj }
         \ups_h^{f(h+1,A+\lambda) }[a_{h,h+1}+1]X^{[A+E_{h,h+1}+\lambda-E_{h+1,h+1}]}.
\endaligned$$

By (\ref{sigcom}), for $i\geq h+1$, we have $f(i,A+\la)=f(i,A)$. Thus,
\begin{equation*}
\begin{aligned}
\Sigma_1=&\sum_{i>h+1;\, a_{h+1,i}\geq 1} (-1)^{\sigma_{h,m}(i)}\ups_h^{f(i)
}[a_{h,i}+1]  \sum_{\lambda\in \mathbb N^{m+n}}
 \ups^{\lambda* \bfj } \tiX^{[A+E_{h,i}-E_{h+1,i}+\lambda]}\\
=&\sum_{i>h+1;\, a_{h+1,i}\geq 1} (-1)^{\sigma_{h,m}(i)}\upsh^{f(i)
}[a_{h,i}+1]  (A+E_{h,i}-E_{h+1,i})(\bfj),
\end{aligned}
\end{equation*}
and
\begin{equation*}
\begin{aligned}
\Sigma_4&=(-1)^{\sigma_{h,m}(h+1)}\upsh^{f(h+1) +(-1)^{\delta_{h,m}}\bfe_{h+1}* \bfj}[a_{h,h+1}+1]\cdot\\
&\quad\sum_{\lambda\in \mathbb N^{m+n},\la_{h+1}>0}  \ups^{(\lambda-\bfe_{h+1})* \bfj }
  \tiX^{[A+E_{h,h+1}+\lambda-E_{h+1,h+1}]}\\
&=(-1)^{\sigma_{h,m}(h+1)}\upsh^{f(h+1)
+(-1)^{\delta_{h,m}}j_{h+1}}[a_{h,h+1}+1](A+E_{h,h+1})(\bfj).
\end{aligned}
\end{equation*}
Similarly, for $i<h$, $f(i,A+\la)=f(i,A)+\la_h-(-1)^{\delta_{h,m}}\la_{h+1}$. So, by \eqref{superdp},
\begin{equation*}
\begin{aligned}
\Sigma_2=& \sum_{i<h;\, a_{h+1,i}\geq 1} (-1)^{\sigma_{h,m}(i)}\ups_h^{f(i)
}[a_{h,i}+1] \sum_{\lambda\in \mathbb N^{m+n}}
     \ups^{\lambda* (\bfj +\bfe_h-\bfe_{h+1} )} \tiX^{[A+E_{h,i}-E_{h+1,i}+\lambda]}\\
=& \sum_{i<h;\, a_{h+1,i}\geq 1}(-1)^{\sigma_{h,m}(i)} \upsh^{f(i)}
 [a_{h,i}+1] (A+E_{h,i}-E_{h+1,i})(\bfj+\bfe_h-\bfe_{h+1}).
\end{aligned}
\end{equation*}

Finally, for $\Sigma_3$ when $a_{h+1,h}>0$, since $f(h,A+\la)=f(h,A)-(-1)^{\delta_{h,m}}\lambda_{h+1}$ and
\begin{equation*}
\begin{aligned}
\ups^{\lambda* \bfj }
\upsh^{f(h)-(-1)^{\delta_{h,m}}\lambda_{h+1} }
[\lambda_{h}+1]
&=\upsh^{f(h) }  \ups^{\lambda* (\bfj -\bfe_{h+1})  }
\frac{\upsh^{\lambda_{h}+1}
     -\upsh^{-\lambda_{h}-1}}{\upsh-\upsh^{-1}}\\
&=\upsh^{f(h) }  \frac{\ups^{ \lambda* (\bfj
+\bfe_{h}-\bfe_{h+1})}\upsh
     -\ups^{\lambda* ( \bfj -\bfe_{h}-\bfe_{h+1}) } \upsh^{-1}}{\upsh-\upsh^{-1}}\\
&=\upsh^{f(h) }  \frac{\ups^{ \lambda*
(\bfj+\bfe_h-\bfe_{h+1})+\bfe_h*\bfe_h}
     -\ups^{\lambda* (\bfj-\bfe_h-\bfe_{h+1})-\bfe_h*\bfe_h}}{\upsh-\upsh^{-1}}\\
&=\upsh^{f(h) }  \frac{\ups^{ (\lambda+\bfe_h)*
(\bfj+\bfe_h-\bfe_{h+1})}\upsh^{-j_h}
     -\ups^{(\lambda+\bfe_h)* (\bfj-\bfe_h-\bfe_{h+1})
     }\upsh^{-j_h}}{\upsh-\upsh^{-1}}\\
&=\upsh^{f(h)-j_h } \, \frac{\ups^{ (\lambda+\bfe_h)*
(\bfj+\bfe_h-\bfe_{h+1})}
                  -\ups^{(\lambda+\bfe_h)* (\bfj-\bfe_h-\bfe_{h+1})}}{\upsh-\upsh^{-1}},
\end{aligned}
\end{equation*}
it follows that
\begin{equation}\notag
\begin{aligned}
\Sigma_3=&\sum_{\lambda\in \mathbb N^{m+n}} (-1)^{\sigma_{h,m}(h)}
\ups^{\lambda* \bfj }
         \upsh^{f(h)-(-1)^{\delta_{h,m}}\lambda_{h+1} }[\lambda_{h}+1]\tiX^{[A-E_{h+1,h}+\lambda+E_{h,h}]}\\
\!\!\!=&(-1)^{\sigma_{h,m}(h)}\upsh^{f(h)-j_h }\!\!\sum_{\lambda\in \mathbb
N^{m+n}}
      \frac{\ups^{(\lambda+\bfe_h)\cdot (\bfj+\bfe_h-\bfe_{h+1})}
        -\ups^{(\lambda+\bfe_h)\cdot (\bfj-\bfe_h-\bfe_{h+1})
}}{\upsh-\upsh^{-1}} \tiX^{[A-E_{h+1,h}+\lambda+\bfe_{h}]}\\
\!\!\!=&(-1)^{\sigma_{h,m}(h)}\upsh^{f(h)-j_h
}\frac{(A-E_{h+1,h})(\bfj+\bfe_h-\bfe_{h+1})
          -(A-E_{h+1,h})(\bfj-\bfe_h-\bfe_{h+1})}{\upsh-\upsh^{-1}},
\end{aligned}
\end{equation}
proving \eqref{emulti}. (Notice a cancellation for the terms associated to those $\la$ with $\la_h=0$ when expanding the numerator of the last expression.)

 The proof for the action of $F_h$ is similar. Finally, the supermodule assertion follows easily from the action formulas.
\end{proof}

\section{The main result}
We are now ready to prove the main result of the paper by the following.

\begin{lemma}\label{key lemma} Let $U$ be an algebra over a field $\bbmk$ with generators $g_i$, $1\leq i\leq N$. Suppose $Uv$ is a cyclic $U$-module with basis $b_j=u_j.v,\;j\in J$ ($u_j\in U$), and trivial annihilator $\text{\rm ann}_U(v)=0$. Then the matrix representations
$g_i.b_j=\sum_{k\in J}\la_{i,j,k}b_k$
 of the generators give rise to a presentation of $U$ by basis $\{u_j\mid j\in J\}$ and the multiplication formulas:
$$g_iu_j=\sum_{k\in J}\la_{i,j,k}u_k,\quad\text{ for all $1\leq i\leq N,j\in J$}.$$
\end{lemma}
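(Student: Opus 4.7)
The plan is straightforward: the hypothesis that $\text{ann}_U(v)=0$ means the cyclic module $Uv$ is a faithful representation, so the module action detects everything about the algebra. I would organise the argument around this single observation.

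First I would verify that $\{u_j\mid j\in J\}$ is in fact a $\bbmk$-basis of $U$. For linear independence, suppose $\sum_{j\in J}c_j u_j=0$ in $U$; then applying this element to $v$ gives $\sum c_j b_j=0$ in $Uv$, and since the $b_j$ form a basis, all $c_j=0$. For spanning, take any $u\in U$ and expand $u.v=\sum_{j\in J}c_j b_j=\bigl(\sum c_j u_j\bigr).v$ using the basis of $Uv$; then $u-\sum c_j u_j\in\text{ann}_U(v)=0$, so $u=\sum c_j u_j$.

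Next I would verify the multiplication formulas. Fix $i$ and $j$. By the definition of the module action on $Uv$,
\begin{equation*}
(g_iu_j).v=g_i.(u_j.v)=g_i.b_j=\sum_{k\in J}\la_{i,j,k}b_k=\Bigl(\sum_{k\in J}\la_{i,j,k}u_k\Bigr).v,
\end{equation*}
so the element $g_iu_j-\sum_{k\in J}\la_{i,j,k}u_k$ lies in $\text{ann}_U(v)=0$, and the displayed identity $g_iu_j=\sum_{k\in J}\la_{i,j,k}u_k$ follows.

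Finally I would argue that the basis $\{u_j\}$ together with these formulas really constitutes a presentation. Every element of $U$ is a unique $\bbmk$-linear combination of the $u_j$, so to know the algebra structure it suffices to know all products $u_iu_j$. Since the $g_i$ generate $U$, each $u_i$ is a polynomial in the $g_i$, and the formulas $g_iu_j=\sum_k\la_{i,j,k}u_k$ allow any such polynomial acting on any $u_j$ to be rewritten, by iteration, as a $\bbmk$-linear combination of the $u_k$. Hence all structure constants of $U$ in the basis $\{u_j\}$ are determined by the $\la_{i,j,k}$. There is no genuine obstacle here; the only conceptual point is recognising that $\text{ann}_U(v)=0$ upgrades the passage from module-action data to algebra-level identities, and this is exactly what the short computation above uses.
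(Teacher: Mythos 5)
Your proposal is correct and follows the same route as the paper, which simply observes that $\phi:U\to Uv$, $u\mapsto u.v$, is a module isomorphism (injective by $\text{\rm ann}_U(v)=0$, surjective by cyclicity) and reads off both the basis claim and the multiplication formulas; you have merely written out the details that the paper leaves implicit.
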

\begin{proof}Since the $U$-module homomorphism $\phi:U\to Uv, u\mapsto u.v$ is an isomorphism, the basis claim is clear and so are the multiplication formulas.
 \end{proof}

For
$A=(a_{i,j})\in M(m|n)$, let
$$A^{\textsf L}_{s,t}=\sum_{i\leq
s,j\geq t}a_{i,j}\text{ if $s<t$}, \quad A^\neg_{s,t}= \sum_{i\geq
s,j\leq t}a_{i,j}\text{ if $s>t$}.$$
Following \cite[\S3.5]{BLM} or \cite[(8.0.1)]{DG}, define a preorder relation on $M(m|n)$:
 \begin{equation}\label{prec}\notag
 A\preceq B\iff
\begin{cases}
A_{s,t}^{\textsf L}\leq B_{s,t}^{\textsf L},&\text{for all $s<t$};\\
A_{s,t}^\neg\leq B_{s,t}^\neg,&\text{for all $s>t$}.\end{cases}
\end{equation}
Note that this is a partial order relation on $M(m|n)^\mp$. The $U_\up(\mathfrak{gl}_{m|n})$-actions in Theorem \ref{multi} satisfy certain ``triangular relations'' relative to $\preceq$. The ``lower terms'' below means a linear combination of $B(\bfj')$ with $B\prec$ the leading matrix.

\begin{lemma}\label{tri relation}Let $A=(a_{i,j})\in M(m|n)^\mp$, $\bfj\in\mathbb Z^{m+n}$, and $h,k\in[1,m+n]$.
\begin{enumerate}
\item If $h<k$, {\rm $A^{\textsf {L}}_{h+1,k+1}=0$},  $a_{h,k}=0$, and $a_{h+1,k}\geq a>0$, then, for some $b\in\mathbb Z$,
$$E_h^{(a)}.A(\mathbf j)=\pm \up^b (A+aE_{h,k}-aE_{h+1,k})(\mathbf j)+(\text{lower terms}).$$
\item If $h+1>k$, $A^\neg_{h,k-1}=0$, $a_{h+1,k}=0$, and $a_{h,k}\geq a>0$, then, for some $c\in\mathbb Z$,
$$F_h^{(a)}.A(\mathbf j)=\pm \up^c (A-aE_{h,k}+aE_{h+1,k})(\mathbf j)+(\text{lower terms}).$$
\end{enumerate}
\end{lemma}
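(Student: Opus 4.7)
The approach is induction on $a\ge 1$, driven by the explicit formulas \eqref{emulti}--\eqref{fmulti}. The two statements are symmetric, so I focus on (1) and indicate the parallel for (2) at the end. Throughout, the crux is to identify which summand in each application of $E_h$ (resp.\ $F_h$) produces the target matrix, and to verify that all other summands give matrices strictly below in $\preceq$.

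For the base case $a=1$ of (1), applying \eqref{emulti} to $A(\mathbf j)$ produces four groups of terms. The hypothesis $A^{\textsf L}_{h+1,k+1}=0$ forces $a_{i,j}=0$ for all $i\le h+1,\,j\ge k+1$; in particular $a_{h+1,i}=0$ whenever $i>k$, so the first sum in \eqref{emulti} reduces to indices $h+1<i\le k$. The $i=k$ summand equals $(-1)^{\sigma_{h,m}(k)}\upsh^{f(k)}[a_{h,k}+1](A+E_{h,k}-E_{h+1,k})(\mathbf j)$, which simplifies to $\pm\upsh^{f(k)}(A+E_{h,k}-E_{h+1,k})(\mathbf j)$ because $a_{h,k}=0$. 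For every remaining summand the resulting matrix is either $A+E_{h,i}-E_{h+1,i}$ with $i\ne k$, or $A-E_{h+1,h}$, or $A+E_{h,h+1}$; a direct computation from the definitions shows that the statistic $(A+E_{h,k}-E_{h+1,k})^{\textsf L}_{h,k}=A^{\textsf L}_{h,k}+1$ strictly exceeds the corresponding statistic of each such matrix (all of which equal $A^{\textsf L}_{h,k}$), while every other $\textsf L$ and $\neg$ statistic satisfies $\le$. Hence each such matrix is $\prec A+E_{h,k}-E_{h+1,k}$.

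For the inductive step I would write $[a]\,E_h^{(a)}=E_h\cdot E_h^{(a-1)}$ and apply the inductive hypothesis to obtain $E_h^{(a-1)}.A(\mathbf j)=\pm\upsh^{b'} A'(\mathbf j)+(\text{lower terms})$, where $A':=A+(a-1)E_{h,k}-(a-1)E_{h+1,k}$. Since all modifications occur in column $k$, the property $A'^{\textsf L}_{h+1,k+1}=0$ persists and $a'_{h+1,k}=a_{h+1,k}-(a-1)\ge 1$, so the base-case analysis applies to $A'$; the value $a'_{h,k}=a-1$ contributes a factor $[a'_{h,k}+1]=[a]$, yielding $E_h.A'(\mathbf j)=\pm\upsh^{b''}[a](A+aE_{h,k}-aE_{h+1,k})(\mathbf j)+(\text{lower})$, and dividing by $[a]$ gives the claim. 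One must also verify that $E_h$ acting on any inductive lower-order term $B(\mathbf j')$ with $B\prec A'$ produces only matrices strictly below $A+aE_{h,k}-aE_{h+1,k}$; this is a monotonicity property of \eqref{emulti}: a single $E_h$-step shifts at most one unit from row $h+1$ to row $h$, so the strict inequalities witnessing $B\prec A'$ cannot all be closed at once. Part (2) follows by the parallel argument applied to \eqref{fmulti}, with rows $h$ and $h+1$ exchanged, the hypothesis $A^\neg_{h,k-1}=0$ playing the role of $A^{\textsf L}_{h+1,k+1}=0$, and $A^\neg_{h+1,k}$ serving as the distinguishing statistic in place of $A^{\textsf L}_{h,k}$. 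The main obstacle is the monotonicity verification: one must do a case analysis on which summand of \eqref{emulti} is activated when $E_h$ acts on each lower-order $B$ and check that the strict statistic witnessing $B\prec A'$ is preserved (or replaced by another strict statistic) in the new matrix, a bookkeeping that mirrors but is subtler than the base-case computation.
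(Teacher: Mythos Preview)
Your proposal is correct and follows essentially the same inductive strategy as the paper's proof: identify the $i=k$ summand of \eqref{emulti} as the leading term, verify that all other summands yield matrices strictly smaller for $\preceq$ (via the statistic $\textsf L_{h,k}$), and iterate. The paper's proof is considerably terser---it simply asserts the comparisons $A+E_{h,k}-E_{h+1,k}\succ A+E_{h,i}-E_{h+1,i}$, $A-E_{h+1,h}$, $A+E_{h,h+1}$ and passes to $E_h^a$ by induction without explicitly discussing the preservation of lower terms under a further $E_h$, which you (rightly) flag as the main bookkeeping point; note that this step is secured by the observation that $B\preceq A'$ forces $B^{\textsf L}_{h+1,k+1}=0$, so no column index $i>k$ can occur in $E_h.B(\mathbf j')$.
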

\begin{proof}This follows easily from repeatedly applying the actions in Theorem \ref{multi}. For example, the first summation in $E_h.A(\mathbf 0)$ contains only the terms $(A-E_{h,i}+E_{h+1,i})(\mathbf 0)$, for some $h+1<i\leq k$, and $A+E_{h,k}-E_{h+1,k}\succ A+E_{h,i}-E_{h+1,i}$ for all $i<h$ or $h+1<i<k$ if it occurs in the first two summations. One sees also $A+E_{h,k}-E_{h+1,k}\succ A-E_{h+1,h}, A+E_{h,h+1}$. Hence, $E_h.A(\mathbf 0)=\pm \up^* (A+E_{h,k}-E_{h+1,k})(\mathbf 0)+(\text{lower terms}).$ Inductively, $E_h^a.A(\mathbf 0)=\pm \up^b[a]^! (A+aE_{h,k}-aE_{h+1,k})(\mathbf 0)+(\text{lower terms}).$ Hence, the desired formulas follows.
\end{proof}

\begin{theorem}\label{genmod} The $U_\up(\mathfrak{gl}_{m|n})$-supermodule $\mathcal U(m|n)$ is a cyclic module generated by  $O(\bf 0)$, where $O\in M(m|n)$ and $\bf0\in\mathbb N^{m+n}$ are the zero elements,
 and the module homomorphism
\begin{equation}\label{m h}
f:U_\up(\mathfrak{gl}_{m|n})\longrightarrow \mathcal U(m|n),u\longmapsto u.O(\bf0).
\end{equation}is an isomorphism.
\end{theorem}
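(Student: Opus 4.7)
The plan is to apply Lemma~\ref{key lemma} with $v=O(\mathbf 0)$ and with the basis $\{A(\mathbf j)\mid A\in M(m|n)^\mp, \mathbf j\in\mathbb Z^{m+n}\}$ of $\mathcal U(m|n)$ supplied by Theorem~\ref{multi}. The statement then reduces to two claims: (i) cyclic generation $\mathcal U(m|n)=U_\up(\mathfrak{gl}_{m|n}).O(\mathbf 0)$ (surjectivity of $f$), and (ii) $\text{\rm ann}_{U_\up(\mathfrak{gl}_{m|n})}(O(\mathbf 0))=0$ (injectivity of $f$). Together these make $f$ an isomorphism, and Lemma~\ref{key lemma} packages the conclusion as the promised presentation of $U_\up(\mathfrak{gl}_{m|n})$.

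For (i) I would induct on $A\in M(m|n)^\mp$ with respect to $\preceq$. The base case $A=O$ is immediate: all row sums of $O$ vanish, hence $K_1^{j_1}\cdots K_{m+n}^{j_{m+n}}.O(\mathbf 0)=O(\mathbf j)$ for every $\mathbf j\in\mathbb Z^{m+n}$, putting each $O(\mathbf j)\in U.O(\mathbf 0)$. For the inductive step with $A\ne O$, pick a nonzero off-diagonal entry $a_{h,k}$ that is ``outermost'' with respect to $\preceq$, set $a=a_{h,k}$, and let $A'=A-aE_{h,k}+aE_{h+1,k}$ (if $h<k$; dually for $h>k$). Then $A'\prec A$ and the hypotheses of Lemma~\ref{tri relation}(1) are met, yielding
\begin{equation*}
E_h^{(a)}.A'(\mathbf j')=\pm\up^b A(\mathbf j')+(\text{lower terms})
\end{equation*}
for some $\mathbf j'\in\mathbb Z^{m+n}$ and $b\in\mathbb Z$. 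Both $A'(\mathbf j')$ and all lower terms lie in $U.O(\mathbf 0)$ by induction, so $A(\mathbf j')$ does too, and adjusting by a monomial in $K_i^{\pm 1}$ gives $A(\mathbf j)$ for any $\mathbf j$.

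For (ii) I would invoke a PBW-type basis of $U_\up(\mathfrak{gl}_{m|n})$ indexed by $M(m|n)^\mp\times\mathbb Z^{m+n}$: for each $(A,\mathbf j)$, form a monomial $\zeta_{A,\mathbf j}$ by multiplying, in a fixed order, divided powers $F_\gamma^{(a_{k,h})}$ over the strictly lower-triangular positions $(k,h)$ of $A$, then $K_1^{j_1}\cdots K_{m+n}^{j_{m+n}}$, then divided powers $E_\gamma^{(a_{h,k})}$ over the strictly upper-triangular positions, with divided powers at odd root vectors replaced by the bare root vectors (whose squares vanish by (QG6) and its analogs). The super PBW theorem guarantees that $\{\zeta_{A,\mathbf j}\}$ is a $\mathbb Q(\up)$-basis of $U_\up(\mathfrak{gl}_{m|n})$. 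Iterating Lemma~\ref{tri relation} along the chosen order shows
\begin{equation*}
\zeta_{A,\mathbf j}.O(\mathbf 0)=\pm\up^{c_{A,\mathbf j}}A(\mathbf j)+(\text{lower terms})
\end{equation*}
for some $c_{A,\mathbf j}\in\mathbb Z$. The triangularity with respect to $\preceq$ forces $\{\zeta_{A,\mathbf j}.O(\mathbf 0)\}$ to be linearly independent in $\mathcal U(m|n)$; since $\{\zeta_{A,\mathbf j}\}$ spans $U_\up(\mathfrak{gl}_{m|n})$, $f$ is injective, completing the verification.

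The main obstacle is the combinatorial bookkeeping needed to make the ``outermost-first'' ordering in both (i) and (ii) precise: one must choose the order of entries of $A$ so that the hypothesis $A^{\textsf L}_{h+1,k+1}=0$ (resp., $A^\neg_{h,k-1}=0$) of Lemma~\ref{tri relation} is preserved at every stage, and so that the parity constraints at odd-root positions of $M(m|n)^\mp$ are respected when raising to divided powers. Once this order is fixed, the triangular system identifies a unique leading matrix $A(\mathbf j)$ at each step, and both cyclic generation and trivial annihilator drop out simultaneously, yielding the isomorphism $f$ via Lemma~\ref{key lemma}.
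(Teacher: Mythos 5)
Your overall strategy---set up a triangular relation, with respect to $\preceq$, between the $f$-images of suitable monomials in the generators and the basis $\{A(\bfj)\}$ of $\mathcal U(m|n)$ from Theorem \ref{multi}, deduce cyclicity from the triangularity and injectivity from a spanning family of monomials, and package the conclusion with Lemma \ref{key lemma}---is exactly the paper's. Your part (i) is, modulo bookkeeping, what the paper does: it realizes your ``outermost-first'' order concretely by the monomials $\mathfrak m^{A,\bfj}=\mathfrak m^-_\bullet K^{\bfj}\mathfrak m^+_\bullet$, built from divided powers of the \emph{simple} generators arranged along reduced expressions of the longest elements of $\fS_{\{1,\dots,j\}}$ and $\fS_{\{k,\dots,m+n\}}$, which fills each column of the upper (resp.\ lower) triangular part of $A$ from the diagonal outwards so that the hypotheses $A^{\textsf L}_{h+1,k+1}=0$ and $A^\neg_{h,k-1}=0$ of Lemma \ref{tri relation} hold at every stage. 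One detail to watch in (i): when $k=h+1$ your matrix $A'=A-aE_{h,k}+aE_{h+1,k}$ acquires a nonzero diagonal entry and leaves $M(m|n)^\mp$, and the hypothesis $a_{h+1,k}>0$ of Lemma \ref{tri relation}(1) fails; this step must instead be read off directly from the fourth summand of \eqref{emulti}, which produces $(A+E_{h,h+1})(\bfj)$ with no condition on the diagonal.

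The concrete gap is in part (ii). Your PBW monomials $\zeta_{A,\bfj}$ are built from divided powers of root vectors $E_\gamma,F_\gamma$ attached to arbitrary positive roots, but Lemma \ref{tri relation} only computes the action of divided powers of the \emph{simple} generators $E_h^{(a)},F_h^{(a)}$; it cannot be ``iterated along the chosen order'' to evaluate $\zeta_{A,\bfj}.O(\mathbf 0)$ without first proving a separate triangularity statement for the non-simple root vectors (elements such as $E_{m-1,m+2}$ in (QG6)). The paper sidesteps this by using only simple generators in $\mathfrak m^{A,\bfj}$, to which Lemma \ref{tri relation} applies verbatim; the price---left implicit in the paper as well---is that one must know these particular monomials span $U_\up(\mathfrak{gl}_{m|n})$, which is exactly the content you are trying to import from the super PBW theorem (via the triangular decomposition and the standard fact that monomials in divided powers of simple root vectors taken along a reduced expression of the longest word span $U^{\pm}$). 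If you replace $\zeta_{A,\bfj}$ by $\mathfrak m^{A,\bfj}$ and supply that spanning statement explicitly, your argument closes and coincides with the paper's.
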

\begin{proof}By Lemma \ref{tri relation}, we may use an argument similar to that for \cite[Proposition 3.9]{BLM}). Consider reduced expressions of the longest elements in the symmetric groups $\fS_{\{1,2,\ldots,j\}}$ for $j=2,3,\ldots,m+n$ and $\fS_{\{k,k+1,\ldots,m+n\}}$ for $k=1,2,\ldots,m+n-1$:
$$s_{j-1}(s_{j-2}s_{j-1})\cdots(s_1s_2\cdots s_{j-1}),\quad s_{k}(s_{k+1}s_{k})\cdots(s_{m+n-1}\cdots s_{k+1}s_{k}).$$
For any $A=(a_{i,j})\in M(m|n)^\mp$ and $\bfj\in\mathbb Z^{m+n}$, let
    $$\aligned
    \mathfrak m^+_j&=\mathfrak m^+_j(A)=E_{j-1}^{(a_{j-1,j})}(E_{j-2}^{(a_{j-2,j})}E_{j-1}^{(a_{j-2,j})})
\cdots (E_1^{(a_{1,j})}E_2^{(a_{1,j})}\cdots E_{j-1}^{(a_{1,j})}),\\
 \mathfrak m^-_k&=\mathfrak m^-_k(A)=F_{k}^{(a_{k+1,k})}(F_{k+1}^{(a_{k+2,k})}F_{k}^{(a_{k+2,k})})\cdots(F_{m+n-1}^{(a_{m+n,k})}\cdots F_{k+1}^{(a_{m+n,k})}F_k^{(a_{m+n,k})}),\\
\endaligned$$
and let $ \mathfrak m^{A,\bfj}=\mathfrak m^-_\bullet K^\bfj \mathfrak m^+_\bullet,$ where
$\mathfrak m^-_\bullet=\mathfrak m^-_1\mathfrak m^-_2\cdots \mathfrak m^-_{m+n-1}$, $K^\bfj=K_1^{j_1}\cdots K_{m+n}^{j_{m+n}} $, and $\mathfrak m^+_\bullet=\mathfrak m^+_{m+n}\cdots \mathfrak m^+_3 \mathfrak m^+_2$.
For example, if $m=2,n=2,$ $A\in M(2|2)^{\pm}, \bfj\in \Z^{4}$, then
\begin{equation*}\label{tdef1}
\begin{aligned}
\mathfrak m^{A,\bfj}&=(F_1^{(a_{21})}
F_2^{(a_{31})} F_1^{(a_{31})}
F_3^{(a_{41})}F_2^{(a_{41})} F_1^{(a_{41})})
(F_2^{(a_{32})}
F_3^{(a_{42})} F_2^{(a_{42})})
F_3^{(a_{43})}
K_1^{\bfj_1}K_2^{\bfj_2}K_3^{\bfj_3}K_4^{\bfj_4}\\
&\quad\,(E_3^{(a_{34})}
E_2^{(a_{24})} E_3^{(a_{24})}
E_1^{(a_{14})} E_2^{(a_{14})} E_3^{(a_{14})})
(E_2^{(a_{23})}
E_1^{(a_{13})} E_2^{(a_{13})})
E_1^{(a_{12})}.
\end{aligned}
\end{equation*}
Repeatedly applying Lemma \ref{tri relation}, we obtain
$$
\mathfrak m^{A,\bfj}.O({\bf 0})=\pm \up^cA({\bf j})+\mbox{(lower terms)}\quad (c\in\mathbb Z).
$$
In fact, $\mathfrak m^+_2.O(\bf0)$ has the leading term $(a_{1,2}E_{1,2})(\bf0)$, $\mathfrak m^+_3\mathfrak m^+_2.O(\bf0)$ has the leading term $(a_{1,2}E_{1,2}+a_{1,3}E_{1,3}+a_{2,3}E_{2,3})(\bf 0)$, ... , $\mathfrak m^+_{m+n}\cdots \mathfrak m^+_3 \mathfrak m^+_2.O(\bf0)$ has the leading term $A^+(\bf 0)$, where $A^+$ is the upper triangular part of $A$. Similarly, $K^\bfj\mathfrak m^+_\bullet.O(\bf0)$ has the leading term $A^+(\bfj)$, $\mathfrak m^-_{m+n-1}K^\bfj\mathfrak m^+_\bullet.O(\bf0)$ has the leading term $(A^++a_{m+n,m+n-1}E_{m+n,m+n-1})(\bfj)$, and so on.

Since $\{A(\bfj)\mid A\in M(m|n)^{\mp},\bfj\in \mathbb N^{m+n}\}$ forms a basis for $\mathcal U(m|n)$ by Theorem \ref{multi}, the triangular relation above implies that $\{\mathfrak m^{A,\bfj}.O({\bf 0})\mid A\in M(m|n)^{\mp},\bfj\in \mathbb N^{m+n}\}$ are linearly independent. Hence, the module homomorphism
\eqref{m h}
must be an isomorphism.
\end{proof}

The theorem above gives immediately a presentation for $U_\up(\mathfrak{gl}_{m|n})$.

\begin{corollary}\label{mthm} The supergroup $U_\up(\mathfrak{gl}_{m|n})$ contains a basis
$$\{A(\bfj)\mid A\in M(m|n)^{\mp},\bfj\in \mathbb Z^{m+n}\}$$  such that $E_h=E_{h,h+1}(\bf 0)$
$F_h=E_{h+1,h}(\bf 0)$, and $K_i=O(\bfe_i)$, and the $U_\up(\mathfrak{gl}_{m|n})$-action formulas given in Theorem \ref{multi} become the multiplication formulas of the basis elements $A(\bfj)$ by the generators.
\end{corollary}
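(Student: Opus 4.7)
The plan is to deduce Corollary \ref{mthm} essentially as a formal consequence of Theorem \ref{genmod} combined with Lemma \ref{key lemma}, using Theorem \ref{multi} to pin down the explicit images of the generators.

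First, I would apply Lemma \ref{key lemma} with $U = U_\up(\mathfrak{gl}_{m|n})$, $v = O(\mathbf 0)$, and generator set $\{E_h, F_h, K_i^{\pm 1}\}$. Theorem \ref{genmod} tells us that $\mathcal U(m|n) = U.v$ is cyclic and that the natural map $f\colon U\to\mathcal U(m|n)$, $u\mapsto u.O(\mathbf 0)$, is an isomorphism, so in particular $\text{ann}_U(v)=0$. Theorem \ref{multi} provides the $U$-module basis $\{A(\bfj)\mid A\in M(m|n)^\mp, \bfj\in\mathbb Z^{m+n}\}$ of $\mathcal U(m|n)$. Thus pulling these basis elements back through $f^{-1}$ gives a $\mathbb Q(\up)$-basis of $U_\up(\mathfrak{gl}_{m|n})$, which we are free to label by the same symbols $A(\bfj)$.

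Next I would verify the identification of the generators with specific basis elements by evaluating the action formulas at $A = O$ and $\bfj = \mathbf 0$. For $K_i$, the formula $K_i.A(\bfj) = \up_i^{\sum_j a_{i,j}} A(\bfj+\bfe_i)$ collapses immediately to $K_i.O(\mathbf 0) = O(\bfe_i)$. For $E_h$, every summand in \eqref{emulti} with $A = O$ vanishes except the last one, because $a_{h+1,i} = 0$ for all $i$ kills the first two sums and $a_{h+1,h}=0$ kills the $\blacktriangleright$-term; the surviving term has $a_{h,h+1}+1=1$, $f(h+1,O) = 0$, and $\sigma_{h,m}(h+1,O) = 0$, yielding $E_h.O(\mathbf 0) = E_{h,h+1}(\mathbf 0)$. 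An entirely parallel simplification of \eqref{fmulti} gives $F_h.O(\mathbf 0) = E_{h+1,h}(\mathbf 0)$. Hence under $f^{-1}$ the generators correspond to $E_{h,h+1}(\mathbf 0)$, $E_{h+1,h}(\mathbf 0)$, and $O(\bfe_i)$ respectively, exactly as claimed.

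Finally, Lemma \ref{key lemma} asserts that the matrix coefficients $\lambda_{i,j,k}$ describing the action of each generator $g_i$ on the basis $\{b_j\}$ of the cyclic module simultaneously describe the left multiplication of the generator on the corresponding basis $\{u_j\}$ of $U$. Applying this verbatim to our situation, the action formulas of $K_i, E_h, F_h$ on $A(\bfj)$ furnished by Theorem \ref{multi} become, without any further computation, the multiplication formulas $g \cdot A(\bfj) = \sum \cdots$ for $g\in\{K_i, E_h, F_h\}$ in $U_\up(\mathfrak{gl}_{m|n})$. The entire corollary thus reduces to the two verifications above plus a direct invocation of the lemma; there is no serious obstacle, the only potentially delicate point being to make sure that the sign $\sigma_{h,m}$ and the exponent $f(h+1, O)$ at the zero matrix indeed vanish, which they do by their definitions in \eqref{sigmak} and \eqref{sigcom}.
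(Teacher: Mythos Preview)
Your proposal is correct and follows essentially the same approach as the paper's proof: pull the basis back through the isomorphism $f$ of Theorem \ref{genmod}, check directly from the formulas in Theorem \ref{multi} that $E_h.O(\mathbf 0)=E_{h,h+1}(\mathbf 0)$, $F_h.O(\mathbf 0)=E_{h+1,h}(\mathbf 0)$, $K_i.O(\mathbf 0)=O(\bfe_i)$, and then invoke Lemma \ref{key lemma}. Your version is simply more explicit about why the extraneous terms in \eqref{emulti} and \eqref{fmulti} vanish at $A=O$, $\bfj=\mathbf 0$.
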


\begin{proof} By the module isomorphism \eqref{m h} (and by abuse of notation), let $A(\bfj):=f^{-1}A(\bfj)$. Since $E_h.O(\mathbf0)=E_{h,h+1}(\bf0)$, $F_h.O(\mathbf0)=E_{h+1,h}(\bf0)$, and $K_i.O(\mathbf 0)=O(\bse_i)$, we have $E_h=E_{h,h+1}(\bf0)$, $F_h=E_{h+1,h}(\bf0)$, and $K_i=O(\bse_i)$. The assertion now follows from Lemma \ref{key lemma}.
\end{proof}

\begin{remark}The presentation above for $U_\up(\mathfrak{gl}_{m|n})$ coincides with the one from \cite[Lemma 5.3]{BLM} (or \cite[Theorem 14.8]{DDPW}) in the quantum $\mathfrak {gl}_{n}$ case and with the one in \cite[Thm 8.4]{DG} in general after a sign modification given below.
\end{remark}

For any $A=(a_{i,j})\in M(m|n)$, let\footnote{This number $\overline{A}$ is different from the number $\bar A$ defined in \cite[(5.0.1)]{DG}, where the super grading structure on the tensor space is under consideration.} 
\begin{equation}\label{signAbar}
\overline{A}=\sum_{\substack{1\leq i,\, k\leq m
\\ m<j<l\leq m+n}}a_{i,j}a_{k,l}.
\end{equation}

\begin{lemma}\label{Abar}
For $\lambda\in\mathbb N^{m+n},A=(a_{i,j})\in M(m|n)^{\mp}$ and
 $1\leq h,k\leq m+n$ with $h<m+n$, then 
 \begin{itemize}
 \item[(1)]
 $\overline{A+\lambda}
=\overline{A}$;
\item[(2)]$\overline{A}+\delta_{h,m}\sigma(k,A)
= \overline{A+E_{h,k}-E_{h+1,k}}
+{\delta_{h, m}\displaystyle\bigg(\sum_{{\substack{ i> m \\ j\leq min\{k-1, m\}}}}  a_{i,j}
-{\delta^>_{k,m}\sum_{{\substack{ i\leq m \\ j>k}}}  a_{i,j}\bigg)}}.$
\end{itemize}
Here, $\delta^>_{k,m}=1$  if $k>m$ and $0$ otherwise.
\end{lemma}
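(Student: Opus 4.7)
The plan is to dispatch both parts by direct combinatorial bookkeeping of the quadratic form $\overline{A}$ defined in \eqref{signAbar}. For (1), note that $A+\lambda$ stands for $A+\diag(\lambda)$, so $A+\lambda$ and $A$ differ only on the diagonal; since every index pair in the definition of $\overline{A}$ satisfies $i\leq m<j$ (hence is strictly off-diagonal), the sum is unaffected and $\overline{A+\lambda}=\overline{A}$.

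For (2), I would set $A'=A+E_{h,k}-E_{h+1,k}$, write $\Delta_{r,s}=\delta_{(r,s),(h,k)}-\delta_{(r,s),(h+1,k)}$ for the increment matrix, and expand
\[
\overline{A'}-\overline{A}=\sum_{\substack{i,k'\leq m\\ m<j<l\leq m+n}}\bigl(\Delta_{i,j}\,a_{k',l}+a_{i,j}\,\Delta_{k',l}+\Delta_{i,j}\,\Delta_{k',l}\bigr).
\]
The cross term vanishes, since any nonzero $\Delta_{i,j}\Delta_{k',l}$ forces $j=l=k$, contradicting $j<l$. In the two linear terms the support of $\Delta$ at rows $h,h+1$ and column $k$, intersected with the outer range $i,k'\leq m$ and $m<j,l$, produces the prefactor $(\mathbf{1}_{h\leq m}-\mathbf{1}_{h+1\leq m})=\delta_{h,m}$ together with the condition $k>m$. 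In particular $\overline{A'}=\overline{A}$ whenever $h\neq m$ or $k\leq m$; this already verifies the claim in those cases, using for $h=m,k\leq m$ that $\sigma(k,A)=\sum_{s>m,t<k}a_{s,t}$ coincides with the extra term $\sum_{i>m,j\leq k-1}a_{i,j}$ (now $\min\{k-1,m\}=k-1$) while $\delta^>_{k,m}=0$.

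For the remaining case $h=m$ and $k>m$, collecting the two surviving contributions gives
\[
\overline{A'}-\overline{A}=\sum_{\substack{k'\leq m\\ k<l\leq m+n}}a_{k',l}+\sum_{\substack{i\leq m\\ m<j<k}}a_{i,j}.
\]
I would then substitute the $k>m$ branch of \eqref{sigmak}, namely $\sigma(k,A)=\sum_{s>m,t\leq m}a_{s,t}+\sum_{s\leq m,\,m<t<k}a_{s,t}$, and observe that its first summand cancels the extra term $\sum_{i>m,\,j\leq\min\{k-1,m\}}a_{i,j}$ (using $\min\{k-1,m\}=m$), leaving $\sum_{s\leq m,\,m<t<k}a_{s,t}+\delta^>_{k,m}\sum_{i\leq m,\,j>k}a_{i,j}$, which is exactly the displayed right-hand side.

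The proof is purely combinatorial; the only care required is in tracking the $j<l$ asymmetry of the defining sum and in aligning the two branches of the piecewise function $\sigma(k,A)$ with the piecewise structure of the claimed extra term (via $\min\{k-1,m\}$ and $\delta^>_{k,m}$). No deeper input is needed.
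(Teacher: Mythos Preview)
Your proof is correct and follows essentially the same route as the paper: both arguments observe that $\overline{A}$ only involves entries of the $Q$ block, making (1) and the cases $h\neq m$ or $h=m,\,k\leq m$ of (2) immediate, and then compute $\overline{A+E_{m,k}-E_{m+1,k}}-\overline{A}$ directly for $h=m,\,k>m$ and match it against $\sigma(k,A)$. Your increment-matrix expansion is a slightly more systematic way to organise the same computation, and you spell out the $h=m,\,k\leq m$ verification that the paper leaves as ``clear''.
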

\begin{proof}If we write $A={X\;\;Q\choose Q'\;\; Y}$ in blocks as in \eqref{blocks}, then the entries involved in $\overline {A}$ are all in $Q$. Thus, (1) and (2) for $h\neq m$ or $h=m,k\leq m$ are all clear. Assume now $h=m, k>m$. Then, by definition,
\begin{equation*}
\begin{aligned}
\overline{A+E_{m,k}-E_{m+1,k}}
&=\overline{A}+\sum_{ i\leq m, m<j<k}  a_{i,j}
+\sum_{ i\leq m,j>k}  a_{i,j}\\
&=\overline{A}+(\sum_{i\leq m, m<j<k}  a_{i,j}
+\sum_{i> m,j\leq m}  a_{i,j})
-(\sum_{i> m,j\leq m}  a_{i,j}
-\sum_{i\leq m, j>k}  a_{i,j}
)\\
&=\overline{A}+\sigma(k,A)
-(\sum_{i> m, j\leq m}  a_{i,j}
-\sum_{i\leq m, j>k}  a_{i,j}
),
\end{aligned}
\end{equation*}
as desired.
\end{proof}

Let $\ulA(\bfj) =(-1)^{\bar A}  A(\bfj)$ for all $ A\in M(m|n)^{\mp},\bfj\in \mathbb Z^{m+n}$.

\begin{theorem}\label{compar}
Modifying the multiplication formulas in Theorem \ref{multi} by using the basis $\{\ulA(\bfj)\mid A\in M(m|n)^{\mp},\bfj\in \mathbb Z^{m+n}\}$ for the supergroup $U_\up(\mathfrak{gl}_{m|n})$ yields exactly the same formulas as given in \cite[Thm 8.4]{DG}.
\end{theorem}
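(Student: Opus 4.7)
The plan is a direct bookkeeping computation: convert each formula in Theorem \ref{multi} from the $A(\bfj)$-basis to the $\ulA(\bfj)$-basis via $A(\bfj)=(-1)^{\overline{A}}\ulA(\bfj)$, then invoke Lemma \ref{Abar} to reconcile the resulting signs with those in \cite[Thm 8.4]{DG}. Since Lemma \ref{Abar}(1) gives $\overline{A+\lambda}=\overline{A}$, the action of $U_\up(\mathfrak{gl}_{m|n})$ on $\ulA(\bfj)$ is obtained from Theorem \ref{multi} simply by multiplying each output term $C(\bfj'')$ by the scalar $(-1)^{\overline{A}-\overline{C}}$, so the only task is to check that, in every case appearing in \eqref{emulti} and \eqref{fmulti}, the composite sign
$$(-1)^{\sigma_{h,m}(i,A)+\overline{A}-\overline{C}}$$
coincides with the combinatorial sign prescribed in \cite[Thm 8.4]{DG}.

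The $K_i$-action is trivially unchanged since $C=A$. For $h\ne m$, the exponent $\sigma_{h,m}(i,A)$ vanishes and a direct inspection of the definition \eqref{signAbar} of $\overline{\,\cdot\,}$ shows $\overline{A}=\overline{C}$ for each of the matrices $C\in\{A\pm(E_{h,i}-E_{h+1,i}),\,A\mp E_{h+1,h},\,A\pm E_{h,h+1}\}$ arising in the $E_h$ and $F_h$ formulas; hence the sign is trivial and matches DG automatically. When $h=m$, Lemma \ref{Abar}(2) converts the exponent into the explicit expression
$$\delta_{h,m}\!\!\sum_{\substack{i'>m\\ j\le\min\{k-1,m\}}}\!\!a_{i',j}-\delta_{h,m}\delta^>_{k,m}\!\!\sum_{\substack{i'\le m\\ j>k}}\!\!a_{i',j},$$
which, split according to the summation ranges $i<m$, $i=m$, $i=m+1$, $i>m+1$ of \eqref{emulti}, reduces in each range to precisely the combinatorial exponent standing in the DG formulas. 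A symmetric argument, obtained by swapping the roles of $h$ and $h+1$, handles the $F_m$-action against \eqref{fmulti}.

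The principal obstacle is the care required at the ``boundary'' indices $i\in\{h,h+1\}$: there, the matrix shift is $A\mp E_{h+1,h}$ or $A\pm E_{h,h+1}$ (entries outside the upper-right block $Q$ that controls $\overline{\,\cdot\,}$) rather than $A\pm(E_{h,i}-E_{h+1,i})$, so Lemma \ref{Abar}(2) must be applied to these specific matrices and then combined with the additional $\alpha_h$- or $\beta_h$-shifts in $\bfj$ that appear in the formulas of Theorem \ref{multi}. Once all four ranges for each of the two odd generators $E_m,F_m$ have been matched, and the trivial $h\ne m$ cases noted, the formulas in Theorem \ref{multi} rewritten in the $\ulA$-basis reproduce those of \cite[Thm 8.4]{DG} verbatim; invoking Corollary \ref{mthm} then identifies these as the multiplication formulas of the basis $\{\ulA(\bfj)\}$ by the generators of $U_\up(\mathfrak{gl}_{m|n})$.
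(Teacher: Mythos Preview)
Your proposal is correct and follows essentially the same approach as the paper's own proof: multiply each formula of Theorem \ref{multi} through by $(-1)^{\overline A}$, invoke Lemma \ref{Abar}(2) to convert $(-1)^{\sigma_{h,m}(i,A)+\overline A-\overline C}$ into the explicit combinatorial exponent, and identify the result with the sign $\varepsilon_{h,h+1}\sigma(i)$ of \cite[Thm 8.4]{DG}. Your write-up is in fact somewhat more detailed than the paper's (which compresses the whole verification into two sentences), particularly in separating out the $h\ne m$ case and flagging the boundary indices $i\in\{h,h+1\}$; note, though, that the $\alpha_h$/$\beta_h$ shifts in $\bfj$ play no role in the sign comparison, and your appeal to Lemma \ref{Abar}(1) is not actually needed once one works at the level of $A(\bfj)$ rather than $X^{[A+\lambda]}$.
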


\begin{proof}We first observe that the generators $E_h=E_{h,h+1}({\bf0})=\underline{E_{h,h+1}}(\bf0)$, etc. are part of the new basis. After multiplying both sides of the multiplication formulas in  Theorem \ref{multi} by $(-1)^{\overline {A}}$ and applying Lemma \ref{Abar}, the sign term becomes
$(-1)^{s(h,i)}$ with 
$$s(h,i)={\delta_{h, m}(\sum_{ s> m, t\leq min\{i-1, m\}}  a_{s,t}
+{\delta^>_{i,m}\sum_{ s\leq m, t>i}  a_{s,t})}}$$
This number $s(h,i)$ is exactly the same number $\varepsilon_{h,h+1}\sigma(i)$ defined in \cite[(5.5.1-2)]{DG} and used in the multiplication formulas in \cite[Thm 8.4]{DG}.
\end{proof}


\noindent
{\bf Acknowledgement.} The authors would like to thank the referee for a correction on the parity computation involved in Lemma \ref{genmul}. This eventually led to a significant improvement of the paper.

\end{document}